\newtheorem{thm}{Theorem}
\newtheorem{defn}{Definition}
\newtheorem{lemma}{Lemma}
\newtheorem{prop}{Proposition}
\newtheorem{cor}{Corollary}
\newcommand{\C}{\mathbb C}
\newcommand*{\CC}{\widehat\C}
\newcommand{\Exc}{\mathcal{E}}
\newcommand*{\R}{\mathbb R}
\newcommand{\D}{\mathbb D}
\newcommand{\N}{\mathbb N}
\newcommand{\Z}{\mathbb Z}
\newcommand{\weil}{\mathcal{D}}
\newcommand{\wto}{\stackrel{\mathrm{w}^*}{\to}}
\newcommand{\cont}{\mathscr{C}}
\newcommand{\absv}[1]{\left\lvert{#1}\right\rvert}
\newcommand{\Supp}{{\operatorname{supp}}}
\newcommand{\itemref}[1]{\ref{#1}.}
\newcommand{\thmref}[1]{Theorem~\ref{#1}}
\begin{document}
\title{Roots of polynomial sequences in root-sparse
regions.}

\author{%
  Henriksen, Christian \\
  \texttt{chrh@dtu.dk} \\
  Department of Applied Mathematics and Computer Science \\
  Technical University of Denmark
  \and
  Petersen, Carsten Lunde \\
  \texttt{lunde@math.ku.dk} \\
  Department of Mathematical Sciences \\
  University of Copenhagen
  \and
  Uhre, Eva \\
  \texttt{euhre@ruc.dk} \\
  Department of Science and Environment \\
  Roskilde University
}

\date{\today}

\maketitle

\begin{abstract}
Given a family $(q_k)_k$ of polynomials, we call an open set $U$ 
\emph{root-sparse} if the number of zeros of $q_k$ is locally uniformly
bounded on $U$.
We study the interplay between the individual zeros of the polynomials $q_k$ 
and those of the $m$th derivatives $q_k^{(m)}$, 
in a root-sparse open set $U$, as $k\to\infty$.
More precisely, if the root distributions $\mu_k$ of $q_k$ 
converge weak* to some compactly supported measure $\mu$, 
whose potential is nowhere locally constant 
on a root-sparse open set $U$, then we link the roots of the $m$th derivative $q_k^{m}$, for an arbitrary $m>0$, to the roots of $q_k$ 
and the critical points of the potential $p_\mu$ on compact subsets of $U$.

We apply this result in a polynomial dynamics setting to obtain convergence results for the roots of the $m$th derivative 
of iterates of a polynomial outside the filled-in Julia set.
We also apply our result in the setting of extremal polynomials.
\end{abstract}
\noindent {\em \small 2010 Mathematics Subject Classification: Primary: 42C05, Secondary: 37F10, 31A15}
\noindent {\em \small Keywords: roots, convergence, divisor.}

%
\section{Introduction and main results} \label{intro}
%

Studies of the statistical 
properties of the root loci of general families $(q_k)_k$ 
of polynomials with degrees increasing to infinity,
have a long history.
Recently, the relation between the roots of $q_k$
and $q_k'$ has gathered a lot of interest,
see e.g.\ \cite{TotikBLMS2016}, \cite{TotikTAMS2019}, \cite{HPU1}, \cite{HPU2}, \cite{Okuyama1}
and \cite{OkuyamaVigny}.
Each $q_k$ has a root distribution $\mu_k$.
The root distributions $(\mu_k)_k$ 
form a pre-compact family of probability measures on the Riemann sphere. 
Hence, passing to a subsequence if necessary, the
sequence converges to some accumulation point $\mu$ 
of the full sequence $(\mu_k)_k$.

Given a family $(q_k)_k$ of polynomials we call an open set $U$ 
\emph{root-sparse} if the number of zeros of $q_k$ is locally uniformly
bounded on $U$ (see Definition~\ref{def:sparse}).
In this paper, we study the interplay between 
the individual zeros of the polynomials $q_k$ 
and those of the $m$th derivatives $q_k^{(m)}$, 
in a root-sparse open set $U$, as $k\to\infty$.

More precisely suppose the root distributions $\mu_k$ of $q_k$ 
converge weak* to some compactly supported measure $\mu$, 
whose potential is nowhere locally constant 
on a root-sparse open set $U$.
On compact subsets of such a root-sparse open set,
we can, for any $m$, link the roots of the $m$th derivative $q_k^{m}$ to the roots of $q_k$ and the critical points of the potential $p_\mu$ of $\mu$.
For precise statements see \thmref{main}
and \thmref{maintechnical} below.

For a typical example of root-sparsity, consider a Borel probability measure $\mu$ with non-polar compact support $K$.
Then the unbounded connected component of $\C\setminus K$ is root-sparse for the family of orthogonal polynomials defined by $\mu$.

Another range of examples is provided in polynomial
iteration.
Indeed, take any polynomial $P$ of degree at least $2$.
Let $K$ be the filled-in Julia set of $P$, and $U = \C\setminus K$.
Then $U$ is root-sparse for the sequence of iterates $q_k=P^{k}$.

In order to state a precise result, 
we need to specify what we understand by 
the root distribution of a polynomial,
root-sparsity
and matching of roots in root-sparse sets. 
For the latter, we use a notion of divisors 
and their convergence. 

Throughout the paper, $(q_k)_k = (q_k)_{k\in\N}$ 
denotes a sequence of polynomials of degrees $n_k>0$ tending to infinity.
As we are only interested in the location and multiplicity of the zeros, 
we can restrict our attention to monic polynomials
$q_k(z) = z^{n_k} + O(z^{n_k-1}) = \prod_{j=1}^{n_k} (z-z_{k,j})$
without loss of generality.
Here $z_{k,j}$, $j=1,\ldots, n_k$ are the roots of the polynomial $q_k$, 
repeated with multiplicity.

The \emph{root-counting measure} of $q_k$ is the measure $\mu_k^\#$ given by
\[
  \mu_k^\# = \sum_{j=1}^{n_k} \delta_{z_{k, j}},
\]
where $\delta_z$ denotes the Dirac point mass at $z$. 
The \emph{root distribution} $\mu_k$ of $q_k$ is the Borel probability measure obtained from normalizing
the root-counting measure
\[
  \mu_k 
  = \frac{1}{n_k} \mu_k^{\#}
  = \frac{1}{n_k}\sum_{j=1}^{n_k} \delta_{z_{k, j}}.
\]
With the aid of $\mu_k^\#$, we can formally define
root-sparsity.
\begin{defn}\label{def:sparse}
    An open set $U\subset\C$ is called \emph{root-sparse} (for $(q_k)_k$) 
    if for any compact set $K\subset U$, there exists $M = M(K)$ 
    such that 
    $$
    \mu_k^\#(K) \leq M,
    $$    
    for all sufficiently large $k$. 
\end{defn}
Clearly, an open root-sparse set $U$ does not meet the support of any accumulation point $\mu$ of $(\mu_k)_k$.

Root-sparse open sets show up in many interesting families of polynomials, e.g.\ sequences of orthogonal, Chebyshev, and Fekete polynomials,
for more examples see the applications below.

Let $U \subset \C$ be an open set.
We call a mapping $\xi : U \to \Z$ a \emph{divisor} on $U$ if the
set $\xi^{-1}(\Z\setminus\{0\})$ has no accumulation points in $U$.

We denote the $\Z$-module of divisors on $U$ by $\weil(U)$.
We consider the divisors on $U$ as elements of the dual space
of the vector space $\cont_c(U)$ of continuous functions on $U$ 
with compact support in $U$, $\weil(U)\subset\cont_c^*(U)$. 
That is, $\xi \in \weil(U)$ acts on $f \in \cont_c(U)$ by 
\[
  \xi(f) = \sum_z \xi(z) f(z),
\]
where the sum is taken over the finite set
$\{ z \in \Supp(f) : \xi(z) \neq 0\}$. 
This allows us to endow $\weil(U)$ with the weak* topology.
For a sequence $(\xi_k)_{k\in\N} \subset \weil(U)$,
and $\xi \in \weil(U)$,
we have $\xi_k \to \xi$ if and only if $\xi_k(f) \to \xi(f)$ 
for every $f \in \cont_c(U)$. 
Notice that if $\xi$ is nonnegative, then $\xi$ corresponds
to a measure, and our notion of convergence corresponds to vague
convergence of measures.

The sequence $(\xi_k)_{k\in\N} \subset \weil(U)$ is called \emph{locally bounded} if and only if for every $f\in\cont_c(U)$ the sequence $|\xi_k|(f)$ is bounded.

Given a non-constant meromorphic function $f:U\to \CC$, we can
associate a divisor $\xi_f\in\weil(U)$ to $f$ by:
\[
\xi_f(z) = 
\begin{cases}
    0,&\qquad \textrm{if $f(z)\notin\{0, \infty\}$,}\\
    m&\qquad \textrm{if $f$ has a zero of order $m$ at $z$,}\\
    -m&\qquad \textrm{if $f$ has a pole of order $m$ at $z$.}
\end{cases}
\]
Thus $\xi_f$ is nonnegative, whenever $f$ is holomorphic.

Given a harmonic function $h : U \to \R$,
we define $h' : U \to \R$ by 
\begin{equation}\label{eq:harmonicprime}
h'(z)
:= 2\frac{\partial}{\partial z} h(z)
= (\frac{\partial}{\partial x}
  - i \frac{\partial}{\partial y}) h(x+iy),
\end{equation}
where we have written $z = x + iy$ in the usual manner. 
It follows from the Cauchy-Riemann equations that 
$h'$ is holomorphic, and we can thus define the nonnegative divisor
$\xi_{h'}$ as above.

Our main result links, for each order of derivative $m\geq 1$, the three divisors in $\weil(U)$, the divisor $\xi_{k,m} := \xi_{q_k^{(m)}}$ of $q_k^{(m)}$, the divisor $\xi_k := \xi_{q_k}$ of $q_k$ and the divisor $\xi_{p_\mu'}$, in the case where the root distributions $\mu_k$ of $q_k$ converge to a limiting probability measure $\mu$ with potential $p_\mu$, which is non-constant on every connected component of $U$. 

\begin{thm}\label{main}
Suppose that $(\mu_k)_k$ 
converges weak* to a compactly supported measure $\mu$.
Let $U\subset \C$ be a root-sparse open set 
on which $p_\mu$ is nowhere locally constant.
Then 
  \[
    \xi_{k,m} - \xi_k 
    \underset{k\to\infty}{\longrightarrow}\ m \xi_{p_\mu'}
    \text{ on } U,\text{ for } m = 0, 1, 2, \ldots
  \]
\end{thm}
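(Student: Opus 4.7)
The plan is to localise to a compact subset of $U$, factor out the zeros of $q_k$ falling in a slight enlargement (these are uniformly few by root-sparsity), and compare the derivatives of the remaining nonvanishing factor with those of $p_\mu$ via Hurwitz's theorem. Fix a compact $K\subset U$ and a compact $K'$ with $K\subset\operatorname{int} K'\subset K'\subset U$; Definition~\ref{def:sparse} then yields $\mu_k^\#(K')\le M$ for some $M$ and all $k$ large. Split $q_k=\alpha_k\beta_k$, where $\alpha_k$ is the monic polynomial with roots exactly at the zeros of $q_k$ in $K'$ (so $\deg\alpha_k\le M$) and $\beta_k$ collects the remaining roots and is hence nonvanishing on $K'$. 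Since the $\alpha_k$ lie in a precompact family of monic polynomials of bounded degree with roots in $K'$, after passing to a subsequence we may assume $\alpha_k\to\alpha$ for some monic polynomial $\alpha$.

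The analytic heart is the claim that $\tfrac{1}{n_k}\log\absv{\beta_k}\to p_\mu$ uniformly on $K$. Indeed, $\tfrac{1}{n_k}\log\absv{\beta_k}$ is the logarithmic potential of the signed measure $\mu_k-\tfrac{1}{n_k}\alpha_k^\#$, which converges weak$^*$ to $\mu$ since the subtracted part has mass $\le M/n_k\to 0$; because $\beta_k$ has no zeros in $K'$ and $\mu$ does not meet $U$, both sides are harmonic on $\operatorname{int} K'$, and standard harmonic-function compactness arguments upgrade weak to uniform convergence on $K$. Uniform convergence of the real parts of a sequence of holomorphic functions implies uniform convergence of all complex derivatives on interior compacts, so $\tfrac{1}{n_k}(\log\beta_k)^{(j)}\to(p_\mu')^{(j-1)}$ for each $j\ge 1$ uniformly on compact subsets of $\operatorname{int}K'$. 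Combining this with the Bell-polynomial identity $\beta_k^{(j)}/\beta_k=B_j(g_k',\ldots,g_k^{(j)})$ for $g_k=\log\beta_k$, and observing that only the partition $(1,\ldots,1)$ realises the maximum power of $n_k$, yields
\[
\beta_k^{(j)}/\beta_k=n_k^j(p_\mu')^j+O(n_k^{j-1})\quad\text{uniformly on }K.
\]

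Leibniz now gives $q_k^{(m)}=\sum_{j=0}^{m}\binom{m}{j}\alpha_k^{(j)}\beta_k^{(m-j)}$; dividing by $n_k^m\beta_k$, the $j=0$ term becomes $\alpha_k(p_\mu')^m+O(1/n_k)$, while for $j\ge 1$ the estimate $\beta_k^{(m-j)}/(n_k^m\beta_k)=O(n_k^{-j})$ together with uniform boundedness of $\alpha_k^{(j)}$ makes each term $O(1/n_k)$. Hence
\[
q_k^{(m)}/(n_k^m\beta_k)\longrightarrow\alpha\,(p_\mu')^m\quad\text{uniformly on }K.
\]
Since $p_\mu$ is nowhere locally constant, $p_\mu'\not\equiv 0$ on any component, and as $\alpha$ is a nonzero polynomial the limit is not identically zero on any component of a neighborhood of $K$. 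Hurwitz's theorem therefore gives divisor convergence $\xi_{q_k^{(m)}}|_K\to\xi_\alpha|_K+m\,\xi_{p_\mu'}|_K$, using that $\xi_{q_k^{(m)}}|_K$ coincides with the divisor of $q_k^{(m)}/(n_k^m\beta_k)$ on $K$ (where $\beta_k$ is nonvanishing). A parallel Hurwitz application to $\alpha_k\to\alpha$ yields $\xi_{q_k}|_K=\xi_{\alpha_k}|_K\to\xi_\alpha|_K$. Subtracting, $\xi_{k,m}-\xi_k\to m\,\xi_{p_\mu'}$ on $K$ along the subsequence; as the limit is independent of the choice of $\alpha$, the full sequence converges, and since $K\subset U$ was arbitrary this establishes the theorem. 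The case $m=0$ is trivial.

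The main obstacle is the potential-theoretic upgrade of weak$^*$ convergence of $\mu_k$ to uniform convergence of $\tfrac{1}{n_k}\log\absv{\beta_k}\to p_\mu$ on $K$, and through it to uniform convergence of all complex derivatives of the nonsingular factor; once this is in place, the remainder reduces to bookkeeping with Leibniz's rule and a direct application of Hurwitz's theorem.
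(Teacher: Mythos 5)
Your architecture (localize to $K\subset\operatorname{int}K'\subset K'\subset U$, factor $q_k=\alpha_k\beta_k$ with $\deg\alpha_k\le M$, expand $q_k^{(m)}$ by Leibniz and the exponential Bell polynomials, conclude with Hurwitz) is genuinely different from the paper's route, which reduces Theorem~\ref{main} to the more technical Theorem~\ref{maintechnical} and proves the latter by induction on $m$ via a heredity statement for the normalized derivatives. Your route, once repaired, is arguably more direct for fixed $m$. But there is one genuine gap, and it sits exactly at what you yourself call the analytic heart.

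The claim that $\tfrac{1}{n_k}\log\absv{\beta_k}\to p_\mu$ uniformly on $K$ is false under the stated hypotheses, and the offered justification (``standard harmonic-function compactness arguments upgrade weak to uniform convergence'') does not exist in the required generality: weak* convergence $\nu_k\to\mu$ does \emph{not} imply convergence of the potentials $p_{\nu_k}\to p_\mu$, even pointwise on the region of joint harmonicity, because the kernel $w\mapsto\log\absv{z-w}$ is unbounded at infinity. Concretely, take $q_k(z)=(z^{n_k-1}-1)(z-e^{n_k})$: the root distributions converge weak* to the equilibrium measure $\mu$ of the unit circle, the set $U=\{\absv{z}>1\}$ is root-sparse and $p_\mu=\log\absv{z}$ is nowhere locally constant there, yet $p_{\mu_k}=\tfrac1{n_k}\log\absv{\beta_k}+o(1)\to p_\mu+1$ on compact subsets of $U$. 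This is precisely why the paper's condition~\itemref{balayage}\ in Theorem~\ref{maintechnical} allows normalizing constants $d_k$, and why its proof of Theorem~\ref{main} never touches the potentials directly: it proves convergence of the Cauchy transforms $p_{\mu_k}'(z)=\int\frac{d\mu_k(w)}{z-w}$ (whose kernel vanishes at infinity, so weak* convergence plus the root-avoidance of compacta off the exceptional set does give locally uniform convergence) and only then recovers $p_{\mu_k}+d_k\to p_\mu$ by integrating along paths via \eqref{eq:recuperatepot}. The good news is that your gap is repairable without disturbing anything downstream: all you use later are the derivatives $g_k^{(j)}=(\log\beta_k)^{(j)}$, $j\ge1$, which are insensitive to an additive constant. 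So replace your claim by ``$\tfrac{1}{n_k}\log\absv{\beta_k}+c_k\to p_\mu$ uniformly on compact subsets of $\operatorname{int}K'$ for suitable constants $c_k$,'' prove it through the Cauchy transform as above, and the Bell-polynomial and Leibniz bookkeeping, the Hurwitz application, and the subsequence argument all go through as you wrote them (modulo the cosmetic point that the error in $\beta_k^{(j)}/\beta_k=n_k^j(p_\mu')^j+\cdots$ is $o(n_k^j)$ rather than $O(n_k^{j-1})$, which is all you need).
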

\noindent Note that the sequences of divisors above are locally uniformly bounded and it is understood that the convergence is weak* with respect to $\cont_c(U)$ as defined above.

The following corollary states,
that when $U$ has convex complement,
we do not need convergence of the root distributions,
as long as the root-loci are uniformly bounded.

\begin{cor}\label{corollaryconvex}
  Suppose that there exists $R > 0$,
  such that $\mu_k(\D(R))=1$, for all k,
  and suppose that $U\subset \C$ is a root-sparse open set such that $V = \C \setminus U$  is convex.
  Then
  \[
    \xi_{k,m} - \xi_k 
    \to 0 \text{ on $U$ for } m = 0, 1, 2, \ldots
  \]
\end{cor}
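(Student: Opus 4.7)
The strategy is to reduce to \thmref{main} by a subsequence argument. Since every $\mu_k$ is a probability measure supported in the compact disk $\overline{\D(R)}$, the sequence $(\mu_k)_k$ is weak* pre-compact. It therefore suffices to prove that for every weak* convergent subsequence $\mu_{k_j}\to\mu$ the hypotheses of \thmref{main} hold on $U$ and, moreover, the limiting divisor $\xi_{p_\mu'}$ vanishes on $U$; the standard subsequence criterion then yields the convergence $\xi_{k,m}-\xi_k\to 0$ for the whole sequence.

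So fix such a weak* limit $\mu$. First I would verify $\Supp\mu\subset V$. For any $f\in\cont_c(U)$ with support $K\subset U$, root-sparsity gives $\bigl|\int f\,d\mu_k\bigr|\le M(K)\|f\|_\infty/n_k\to 0$, so $\mu$ has no mass on $U$; since $V$ is closed, $\Supp\mu\subset V$.

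Next I would show that $p_\mu'$ has no zeros on $U$, which immediately gives $m\,\xi_{p_\mu'}\equiv 0$ there. For $z\in U$, and hence $z\notin V$, the hyperplane separation theorem applied to the closed convex set $V$ yields $\alpha\in\C$ with $|\alpha|=1$ and $c>0$ such that $\mathrm{Re}(\alpha(w-z))\ge c$ for every $w\in V$. Writing $z-w=\rho e^{i\phi}$, a short calculation then gives $\mathrm{Re}(\bar\alpha/(z-w))<0$ for every $w\in\Supp\mu$, so integrating against the probability measure $\mu$ forces $\mathrm{Re}(\bar\alpha\,p_\mu'(z))<0$ and in particular $p_\mu'(z)\ne 0$. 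Geometrically this is the familiar statement that $w\mapsto 1/(z-w)$ sends the convex set $V$ into an open half-plane not containing $0$, so its $\mu$-barycenter cannot vanish.

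Finally I would check that $p_\mu$ is nowhere locally constant on $U$. Since $U\cap\Supp\mu=\emptyset$, the potential $p_\mu$ is harmonic on $U$, so it is locally constant somewhere iff it is constant on some connected component of $U$. The key sub-claim is that every connected component of $U$ is unbounded: if $U_0$ were a bounded component and $z_0\in U_0$, hyperplane separation of $\{z_0\}$ from $V$ would produce an open half-plane $H\ni z_0$ contained in $U$, and by connectedness $H\subset U_0$, contradicting boundedness. On any unbounded component, the expansion $p_\mu(z)=\log|z|+O(1)$ as $|z|\to\infty$ rules out constancy. Now \thmref{main} applies along the subsequence and gives $\xi_{k_j,m}-\xi_{k_j}\to 0$, completing the reduction. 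The only genuinely new input is the separation argument for $p_\mu'\ne 0$; the rest is bookkeeping.
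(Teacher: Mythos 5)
Your proof is correct and follows essentially the same route as the paper's: extract a weak* convergent subsequence by pre-compactness, observe that convexity of $V \supset \Supp\mu$ forces $p_\mu'$ to be zero-free on $U$ and every component of $U$ to be unbounded (hence $p_\mu$ nowhere locally constant), and then invoke \thmref{main} along the subsequence. The only difference is that you spell out the hyperplane-separation arguments that the paper states without proof.
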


\begin{cor}\label{corollaryPonvex}
    Suppose that $(\mu_k)_k$ converges weak* to the equilibrium measure $\omega$ of some non-polar, polynomially convex, connected compact set $K$. 
    Suppose moreover that $U = \C\setminus K$ is root-sparse. 
    Then 
   \[
    \xi_{k,m} - \xi_k 
    \to 0 \text{ on $U$ for } m = 0, 1, 2, \ldots
  \]
\end{cor}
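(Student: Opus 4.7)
The plan is to apply \thmref{main} with $\mu = \omega$ and show that the resulting limit divisor vanishes identically on $U$. Concretely, two things must be verified: (i) that $p_\omega$ is nowhere locally constant on $U$, so that \thmref{main} is applicable, and (ii) that $\xi_{p_\omega'} = 0$ on $U$, so that the limit of $\xi_{k,m} - \xi_k$ is $0$. Both will follow from the single statement that the holomorphic function $p_\omega'$ has no zeros on $U$.

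Since $\omega$ is the equilibrium measure of the non-polar compact set $K$, classical potential theory provides the identity $p_\omega(z) = g_K(z) + \log\operatorname{cap}(K)$ on $U$, where $g_K$ denotes the Green's function of $U$ with pole at infinity. Because $K$ is connected and polynomially convex, $\CC \setminus K$ is simply connected, and the Riemann mapping theorem supplies a conformal isomorphism $\phi\colon \CC\setminus K \to \CC\setminus \overline\D$ with $\phi(\infty)=\infty$. With the standard normalization at infinity we have $g_K(z) = \log|\phi(z)|$ on $U$, and then \eqref{eq:harmonicprime} yields $p_\omega'(z) = \phi'(z)/\phi(z)$.

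Now $\phi$ is a conformal isomorphism, so $\phi'(z)\neq 0$ on $U$, and $|\phi(z)|>1$ on $U$ forces $\phi(z)\neq 0$; hence $p_\omega'$ is holomorphic and non-vanishing on $U$. This immediately gives $\xi_{p_\omega'}=0$. The pointwise non-vanishing of $p_\omega'$ also means that the gradient of $p_\omega$ never vanishes, so $p_\omega$ is nowhere locally constant on $U$. Applying \thmref{main} then yields $\xi_{k,m}-\xi_k \to m\cdot 0 = 0$ on $U$, for every $m\geq 0$.

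The main obstacle I anticipate is bookkeeping with potential-theoretic normalizations: one must pin down the constant relating $p_\omega$ and $g_K$, and confirm the normalization of $\phi$ at infinity that gives $g_K=\log|\phi|$ rather than $\log|\phi|$ shifted by some additive constant. Once these conventions are fixed, the argument is essentially the one-line observation that $\phi'/\phi$ cannot vanish on $U$.
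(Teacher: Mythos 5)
Your proof takes essentially the same route as the paper's: the paper's two-line proof also reduces the corollary to the fact that $p_\omega = g_\Omega + I(\omega)$ is non-constant and has no critical points on $U$ because $K$ is connected, and you have simply supplied the standard proof of that classical fact by passing through the exterior Riemann map $\phi$ and observing $p_\omega' = \phi'/\phi \neq 0$ on $U$. Your normalization bookkeeping is correct and matches the paper's convention $g_\Omega = p_\omega - I(\omega)$ with $I(\omega) = \log c(K)$.
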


The proof of Theorem~\ref{main} relies on a more general result. 
The assumption that the root distributions $\mu_k$ converge can be relaxed to 
the assumption that their potentials $p_{\mu_k}$, properly adjusted by additive constants, converge to a limiting harmonic function $p$ on $U$, in the sense given in the following theorem.

\begin{thm}\label{maintechnical}
Let $U \subset \C$ be an open set
and $p : U \to \R$ a nowhere locally constant harmonic function
such that the following two conditions are satisfied.
\begin{enumerate}
    \item\label{uniformboundroots} 
    $U$ is root-sparse,
    \item\label{balayage} 
    for any infinite set $N_1\subset\N$ there exist an infinite subset
    $N_2\subset N_1$, a subset $\Exc\subset U$
    without accumulation points in $U$
    and a sequence $(d_k)_k\subset \R$ such that 
    \[
    d_k+p_{\mu_k}\to p,\quad
    \textrm{locally~uniformly in $U\setminus\Exc$ as $k\to\infty$ in $N_2$}.
    \]
\end{enumerate}
Then 
  \[
    \xi_{k,m} - \xi_k 
    \underset{k\to\infty}{\longrightarrow}\ m \xi_{p'} 
    \text{ on } U,\text{ for } m = 0, 1, 2, \ldots
  \]
\end{thm}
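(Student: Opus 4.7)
The plan is to promote condition~\ref{balayage} into locally uniform convergence of the normalized ratios $g_{k,m}:=q_k^{(m)}/(n_k^m q_k)$ toward $(p')^m$ on $U\setminus\Exc$, and then extract divisor convergence via the argument principle. Since $\xi_{k,m}-\xi_k$ will be shown to be locally uniformly bounded, a standard sub-subsequence argument reduces the claim to exhibiting, for every infinite $N_1\subset\N$, a subsequence $N_2\subset N_1$ along which the convergence holds.

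Fix such $N_1$; by~\ref{balayage} there exist $N_2\subset N_1$, a discrete set $\Exc\subset U$, and constants $(d_k)_{k\in N_2}$ with $d_k+p_{\mu_k}\to p$ locally uniformly on $U\setminus\Exc$. Because $p$ is bounded on compacts of $U\setminus\Exc$ while $p_{\mu_k}(\zeta)=-\infty$ at every zero $\zeta$ of $q_k$, the polynomial $q_k$ must be zero-free on each compact $K'\subset U\setminus\Exc$ for $k\in N_2$ sufficiently large, so that $d_k+p_{\mu_k}$ is actually harmonic there. Representing these harmonic functions locally as real parts of holomorphic maps and applying Weierstrass' theorem lifts the uniform convergence to uniform convergence of the derivative in the sense of~(\ref{eq:harmonicprime}):
\[
g_{k,1}=\frac{q_k'}{n_k\,q_k}=(d_k+p_{\mu_k})'\ \longrightarrow\ p'\qquad\textrm{locally uniformly on }U\setminus\Exc.
\]
For higher $m$, the quotient rule yields the recursion
\[
g_{k,m+1}=\frac{1}{n_k}\,g_{k,m}'+g_{k,m}\,g_{k,1},
\]
and since $1/n_k\to 0$ and derivatives pass through locally uniform limits of holomorphic functions by Weierstrass, induction on $m$ gives $g_{k,m}\to(p')^m$ locally uniformly on $U\setminus\Exc$.

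The divisor of $g_{k,m}$ equals $\xi_{k,m}-\xi_k$ and that of $(p')^m$ equals $m\,\xi_{p'}$, a locally finite collection of positive point masses since the nonconstancy of $p$ makes $\{p'=0\}$ discrete in $U$. For any open disk $D\Subset U$ whose bounding circle $\partial D$ avoids both $\Exc$ and $\{p'=0\}$, $(p')^m$ is bounded away from zero on $\partial D$ and $g_{k,m}\to(p')^m$ uniformly there, so $g_{k,m}'/g_{k,m}\to((p')^m)'/(p')^m$ uniformly on $\partial D$; the argument principle then gives
\[
\xi_{g_{k,m}}(D)=\frac{1}{2\pi i}\oint_{\partial D}\frac{g_{k,m}'}{g_{k,m}}\,dz\ \longrightarrow\ m\,\xi_{p'}(D),
\]
with eventual equality since both sides are integers. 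If moreover $\overline{D}$ is disjoint from $\Exc\cup\{p'=0\}$, Hurwitz upgrades this to $\xi_{g_{k,m}}|_D\equiv 0$ eventually, as $g_{k,m}$ is then holomorphic and nonvanishing on $D$ for large $k$. Covering any compact $K\subset U$ by finitely many such disks gives the uniform local bound $|\xi_{k,m}-\xi_k|(K)=O(1)$.

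For a test function $\phi\in\cont_c(U)$, cover $\Supp\phi$ by finitely many small disks $D_\alpha$ centered at the points of $(\Exc\cup\{p'=0\})\cap\Supp\phi$ together with an open set disjoint from $\Exc\cup\{p'=0\}$, and pick a subordinate partition of unity. On the away-part $\xi_{g_{k,m}}$ vanishes eventually; on each $D_\alpha$ the total mass of $\xi_{g_{k,m}}$ is eventually $m\,\xi_{p'}(D_\alpha)$, while its support shrinks to $\{z_\alpha\}$ (any subdisk of $D_\alpha$ missing $z_\alpha$ falls under the second case above and is therefore eventually divisor-free). Combining the shrinking support, the uniform total-variation bound, and the integer-valued mass convergence yields $(\xi_{k,m}-\xi_k)(\phi)\to m\,\xi_{p'}(\phi)$, completing the proof along $N_2$. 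The most delicate step will be this final weak* upgrade: signed integer-valued divisors whose support varies with $k$ can pair erratically with continuous test functions, so one must carefully couple the eventually constant total mass inside each $D_\alpha$ with the spatial concentration at $z_\alpha$ to rule out asymmetric cancellations that the disk-by-disk mass statement alone does not preclude.
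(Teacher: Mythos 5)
Your proposal is correct in substance but follows a genuinely different route from the paper for the higher derivatives. The paper first proves the case $m=1$ by comparing the zeros of $q_k'$ with those of $q_k p'$ via Rouch\'e's theorem on small disks around the exceptional points and the critical points of $p$, and then handles $m\geq 2$ by showing that hypotheses \itemref{uniformboundroots} and \itemref{balayage} are \emph{hereditary}, i.e.\ passed on to the normalized derivative sequence $(\frac{1}{n_k}q_k')_k$ (Proposition~\ref{Hereditary}, which requires re-verifying condition \itemref{balayage} for $p_{\nu_k}$ with an enlarged exceptional set $\Exc\cup\{p'=0\}$), followed by telescoping $\xi_{k,m}-\xi_k=\sum_{l=1}^m(\xi_{k,l}-\xi_{k,l-1})$. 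You instead work directly with $g_{k,m}=q_k^{(m)}/(n_k^m q_k)$ and the recursion $g_{k,m+1}=\frac{1}{n_k}g_{k,m}'+g_{k,m}g_{k,1}$, which (via Weierstrass) gives $g_{k,m}\to (p')^m$ locally uniformly on $U\setminus\Exc$ in one induction; the argument principle on disks then replaces Rouch\'e. This is arguably cleaner: it avoids the heredity proposition entirely and makes the factor $m$ in $m\xi_{p'}$ appear transparently as the order of vanishing of $(p')^m$. What the paper's route buys is a statement of independent interest (heredity of the hypotheses, which they also reuse in the proof of Theorem~\ref{main}); your route buys a shorter induction. Your remaining steps (zero-freeness of $q_k$ on compacts of $U\setminus\Exc$ from the uniform convergence of potentials, the sub-subsequence reduction, and the final $\epsilon$-argument pairing shrinking disks with the uniform continuity of $\phi$) match the paper's Proposition~\ref{ResultFirsderivatives} in spirit.

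One point must be made explicit, because as written your proof never invokes condition \itemref{uniformboundroots} at all, and without it the final step fails. The argument principle on a disk $D_\alpha$ controls only the \emph{net} count $\xi_{g_{k,m}}(D_\alpha)$ (zeros minus poles), not the total variation $|\xi_{k,m}-\xi_k|(D_\alpha)$, which is what your ``asymmetric cancellation'' estimate $\sum_{z\in D_\alpha}|\xi_{g_{k,m}}(z)|\,|\phi(z)-\phi(z_\alpha)|$ requires. The negative part of $\xi_{k,m}-\xi_k$ on $D_\alpha$ is bounded by $\mu_k^\#(\overline{D}_\alpha)$, and it is exactly root-sparsity that makes this $\leq M$ uniformly in $k$; the positive part is then bounded by $M$ plus the (convergent) net count. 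Condition \itemref{balayage} alone does not preclude $q_k$ from having an unbounded number of zeros clustering at a point of $\Exc$ (their combined weight in $\mu_k$ may tend to $0$), in which case both parts of the divisor could blow up and the pairing with $\phi$ could diverge. With this one-line appeal to condition \itemref{uniformboundroots} inserted where you assert the bound $|\xi_{k,m}-\xi_k|(K)=O(1)$, your argument is complete.
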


Before proving our results (in Section~\ref{sec:proofs}),
we apply them in a polynomial dynamics setting as well as in an
extremal polynomial setting.

\section{Prerequisites in potential theory}

Suppose $\mu$ is a finite Borel measure with compact non-polar support $K$. Its potential $p_\mu$ is defined as
\[
p_\mu(z)=\int\log|z-w|d\mu(w) = \mu(\C)\log\lvert z \rvert + o(1),
\]
where we follow the sign convention of \cite{Ransford}. 

The potential is harmonic on $V := \C \setminus K$.
The derivative $p_\mu'$ (defined by \eqref{eq:harmonicprime})
is thus well-defined and holomorphic on $V$,
where it is given by the
Cauchy transform of $\mu$
\begin{equation}\label{eq:cauchytransform}
p_\mu'(z)=\int\frac{1}{z-w} d\mu(w).
\end{equation}
It follows that if $\gamma$ is a curve in $V$ connecting $z_0$ to $z_1$, then 
\begin{equation}\label{eq:recuperatepot}
p_\mu(z_1)= p_\mu(z_0) + \Re\left(\int_\gamma p_\mu' \right).
\end{equation}

Let $\Omega$ denote the unbounded, connected component 
of the complement of $K$, and let $\omega$ 
denote the equilibrium probability measure for $K$. 
The Green's function $g_\Omega$ with pole at infinity satisfies $g_\Omega=p_\omega-I(\omega)$, where 
$$
I(\omega) = \iint\log|z-w|d\mu(w)d\mu(z)
$$ is the energy of $\omega$. 
The logarithmic capacity of a compact set $K$ is $c(K)=e^{I(\omega)}$.

%
\section{Applications}
%

Our first application comes from polynomial iteration.
See e.g. \cite{Milnor} for a general introduction.
Let $P : \C \to \C$ be a monic polynomial of degree $d > 1$.
We obtain a family $(q_k)_k$ of polynomials, by letting
$q_k = P^k$ denote the $k$th iterate of $P$.

Given $z \in \C$ there are two possibilities.
Either $q_k(z) \to \infty$ or $q_k(z)$, $k = 1, 2, \ldots$, form a bounded sequence.
The set of $z$ such that the former occurs is called the filled-in Julia set $K(P)$,
that is,
\[
  K(P) = \{z\in \C: (P^k(z))_k \text{ is a bounded sequence}\}.
\]
It is well known and easy to see that $K(P)$ is compact, nonempty,
and polynomially convex.
In particular, $\Omega(P) :=\C \setminus K(P)$ is connected.

Let $g_{\Omega}$ denote the Green's function associated to $\Omega(P)$.
For monic polynomials, it is equal to the potential $p_\omega$ associated to
the equilibrium distribution $\omega$ on $K$.
We obtain the following result from Theorem~\ref{maintechnical}.

\begin{prop}\label{pro:dyn}
  Let $C \subset \Omega(P)$ be compact and such that $\partial C$
  does not contain any critical points of $g_{\Omega}$.
  Let $s$ denote the number of critical points of $g_{\Omega}$
  in $C$ counted with multiplicity.
  Then, for any $m \in \{0, 1, \ldots\}$ there exists $k_0$,
  so that when $k \geq k_0$, the $m$th derivative $q_k^{(m)}$
  has exactly $sm$ roots in $C$ counted with multiplicity.
\end{prop}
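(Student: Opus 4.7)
The plan is to recognize Proposition~\ref{pro:dyn} as a direct application of Theorem~\ref{main} to the sequence $q_k = P^k$ on $U = \Omega(P)$, followed by a short discreteness argument that upgrades the resulting weak* divisor convergence to an exact integer count on $C$.

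First I would verify the hypotheses of Theorem~\ref{main}. Weak* convergence $\mu_k \to \omega$ is Brolin's equidistribution theorem for iterated polynomials. To see that $\Omega(P)$ is root-sparse, let $C \subset \Omega(P)$ be compact and set $\delta = \min_{C} g_\Omega > 0$; the functional equation $g_\Omega \circ P = d \cdot g_\Omega$ gives $g_\Omega(P^k(z)) \geq d^k \delta \to \infty$ uniformly on $C$, so $P^k$ has no zeros in $C$ for $k$ large, and in particular $\mu_k^\#(C)$ is uniformly bounded. Finally, on $\Omega(P)$ we have $p_\omega = g_\Omega + I(\omega)$, and $g_\Omega$ is a non-constant harmonic function on the connected open set $\Omega(P)$ (it grows logarithmically at infinity), so by the identity principle it is nowhere locally constant. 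Theorem~\ref{main} then yields
\[
\xi_{k,m} - \xi_k \;\longrightarrow\; m\,\xi_{g_\Omega'}
\]
weak* in $\weil(\Omega(P))$, since $p_\omega' = g_\Omega'$ on $\Omega(P)$.

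The limit divisor $\xi_{g_\Omega'}$ is nonnegative and supported precisely at the critical points of $g_\Omega$, counted with their multiplicities as zeros of the holomorphic function $g_\Omega'$. By hypothesis none of these lie on $\partial C$, so all $s$ critical points (counted with multiplicity) in $\bar C$ lie in the interior of $C$. To pin down $\xi_{k,m}(C)$, I would pick $\chi_\pm \in \cont_c(\Omega(P))$ with $\chi_- \leq \mathbf{1}_C \leq \chi_+$ such that both are identically $1$ on small disks around each critical point inside $C$ and both vanish on small disks around each critical point outside $C$; this is possible because the critical points of $g_\Omega$ form a discrete set and, by hypothesis, none lie on $\partial C$. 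Nonnegativity of the divisor $\xi_{k,m}$ then gives
\[
\xi_{k,m}(\chi_-) \;\leq\; \xi_{k,m}(C) \;\leq\; \xi_{k,m}(\chi_+).
\]

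For $k$ large the supports of $\chi_\pm$ are compact subsets of $\Omega(P)$ containing no roots of $q_k$ (by the same Green's function argument applied to a slightly larger compact set), so $\xi_k(\chi_\pm) = 0$, and the divisor convergence above gives $\xi_{k,m}(\chi_\pm) = (\xi_{k,m} - \xi_k)(\chi_\pm) \to m\,\xi_{g_\Omega'}(\chi_\pm) = ms$. Since $\xi_{k,m}(C)$ is an integer squeezed between two sequences tending to $ms$, it equals $ms$ for all sufficiently large $k$. The main subtlety is precisely this final passage from a weak* limit to an exact integer count: a generic weak* convergence allows boundary jumps, so one needs both the hypothesis that $\partial C$ carries no mass of the limit divisor and the automatic nonnegativity of $\xi_{k,m}$ to close the argument.
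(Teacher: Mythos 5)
Your endgame is exactly the paper's: sandwich the indicator of $C$ between test functions that are locally constant near the (discrete, hence finitely many relevant) critical points of $g_\Omega$, use nonnegativity of the divisor $\xi_{k,m}$, the eventual vanishing of $\xi_k$ on compact subsets of $\Omega(P)$, and integrality to upgrade the weak* limit to an exact count; the paper's $f$ and $F$ are your $\chi_-$ and $\chi_+$. The difference is at the front end. The paper feeds the elementary locally uniform convergence $d^{-k}\log|P^k|\to g_\Omega$ (coming from $g_\Omega\circ P=d\,g_\Omega$) into Theorem~\ref{maintechnical}, with $\Exc=\emptyset$ and $d_k=0$, and never needs to know that the root distributions converge. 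You instead invoke Brolin's equidistribution theorem $\mu_k\to\omega$ so as to apply Theorem~\ref{main}. Both routes reach the same limit divisor $m\,\xi_{g_\Omega'}$, but the paper's is self-contained, and this proposition is precisely the sort of application Theorem~\ref{maintechnical} was built for. Your root-sparsity argument via $g_\Omega(P^k(z))=d^kg_\Omega(z)\to\infty$ on compacta is a clean variant of the paper's escaping-preimages argument and is fine.

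One point needs repair: Brolin's theorem gives $\mu_k\to\omega$ only when $0$ is not an exceptional point of $P$. For the monic polynomial $P(z)=z^d$ one has $\mu_k=\delta_0$ for every $k$, so $\mu_k\to\delta_0\neq\omega$, and your opening sentence is false in that case. The proposition itself still holds there (trivially, $s=0$ and all roots of $q_k^{(m)}$ sit at the origin), and your argument can be salvaged by observing that the actual weak* limit $\mu=\delta_0$ satisfies $p_\mu(z)=\log|z|=g_\Omega(z)$ on $\Omega(P)$, so Theorem~\ref{main} still yields the limit divisor $m\,\xi_{g_\Omega'}$. But as written the appeal to Brolin is not unconditionally valid; either exclude and treat $P(z)=z^d$ separately, or argue directly that every weak* limit of $(\mu_k)_k$ has potential agreeing with $g_\Omega$ up to a constant on $\Omega(P)$. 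The paper's route through Theorem~\ref{maintechnical} sidesteps this issue entirely.
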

\begin{proof}
We show that the conditions in Theorem~\ref{maintechnical} are satisfied for $U=\Omega(P)$
and $p = g_\Omega$.
Choose $R>0$ sufficiently large such that when $\lvert z \rvert > R$,
then $\lvert(P(z))\rvert > 2 \lvert z \rvert$,
and let $V = \C \setminus \overline{\D}_R$.
Then $P^k$ has no roots in $\overline{V}$ for any $k = 1, 2, \ldots$, 
$V \subset P^{-1}(V) \subset P^{-2}(V) \subset \cdots$
and $\Omega(P) = \cup_{k > 0} P^{-k}(V)$.
Let $L \subset \Omega(P)$ be an arbitrary compact set.
By compactness there exists $k_0$, 
such that $L \subset P^{-k}(V)$ when $k \geq k_0$.
In particular $P^k$ has no zeros in $L$ when $k \geq k_0$,
showing that condition~\itemref{uniformboundroots}\ is satisfied.

It is well known that $g_\Omega\circ P = d g_\Omega$ on $\Omega(P)$,
and it follows that
\[
  g_\Omega
  = \lim_{k\to\infty}d^{-k}\log|P^k|
  = \lim_{k\to\infty}d^{-k}\log|q_k|
\]
locally uniformly on $\Omega(P)$,
see for example \cite[Cor. 6.5.4]{Ransford}.
The Green's function $g_\Omega$ is harmonic and non-constant on $\Omega(P)$,
so condition~\itemref{balayage}\ is fulfilled with $p=g_\Omega$,
$N_2=N_1=\N$, $d_k=0$ for all $k$, and $\Exc=\emptyset$.

Hence we can apply the main theorem and obtain
\[
    \xi_{k,m} - \xi_k 
    \underset{k\to\infty}{\longrightarrow} m \xi_{p'},
    \text{ for } m = 0, 1, 2, \ldots 
\]
We have already seen that given a compact set $L\subset \Omega(P)$,
$P^k$ has no zeros in $L$, for $k$ sufficiently big,
and it follows that $\xi_k \to 0$.
Hence  
\[
  \xi_{k,m}
  \underset{k\to\infty}{\longrightarrow} m \xi_{p'}, \text{ for } m = 0, 1, 2, \ldots 
\]

Let $C \subset \Omega(P)$ be an arbitrary compact set
having no critical points of $g_\Omega$ in its boundary.
Let $s$ denote the number of critical points of $g_\Omega$
in $C$ and $t_k$ denote the number of zeros in $C$ 
of the $m$th derivative of $P^k$ both counted with multiplicity.
We must show that for every $m$, there exists $k_0$,
such that when $k \geq k_0$ then $t_k = sm$.

Let $f : \Omega(P) \to [0, 1]$, $f \in \cont_c(\Omega(P))$ satisfy
$f(z) = 0$ outside $C$ and $f(z) = 1$ at every critical point
of $g_\Omega$ in $C$.
Then
\[
  t_k \geq \xi_{k, m}(f)
  = m\xi_{g'_\Omega}(f) + o(1)
  = sm + o(1) \text{ as }k \to \infty.
\]
Since $t_k$ and $sm$ are integers, it follows that $t_k \geq sm$
for $k$ sufficiently big.

To see the other inequality,
let $F : \Omega(P) \to [0, 1]$, $F \in \cont_c(\Omega(P))$,
satisfy $F(z) = 1$ when $z \in C$ and $F(z) = 0$
for every critical point of $g_\Omega$ outside $C$.
Such a function exists since $g_\Omega$ is non-constant and harmonic
in $\Omega(P)$, so the critical points cannot accumulate on $C$.
Then
\[
  t_k \leq \xi_{k, m}(F)
  = m\xi_{g'_\Omega}(F) + o(1)
  = sm + o(1) \text{ as }k \to \infty.
\]
Hence $t_k \leq sm$ for $k$ sufficiently big, finishing the proof.
\end{proof}

We have illustrated the case $P(z) = z^2 + \frac{1}{2}$
in Figure~\ref{fig:dyn}.

\begin{figure}
   \includegraphics[width=0.48\textwidth]{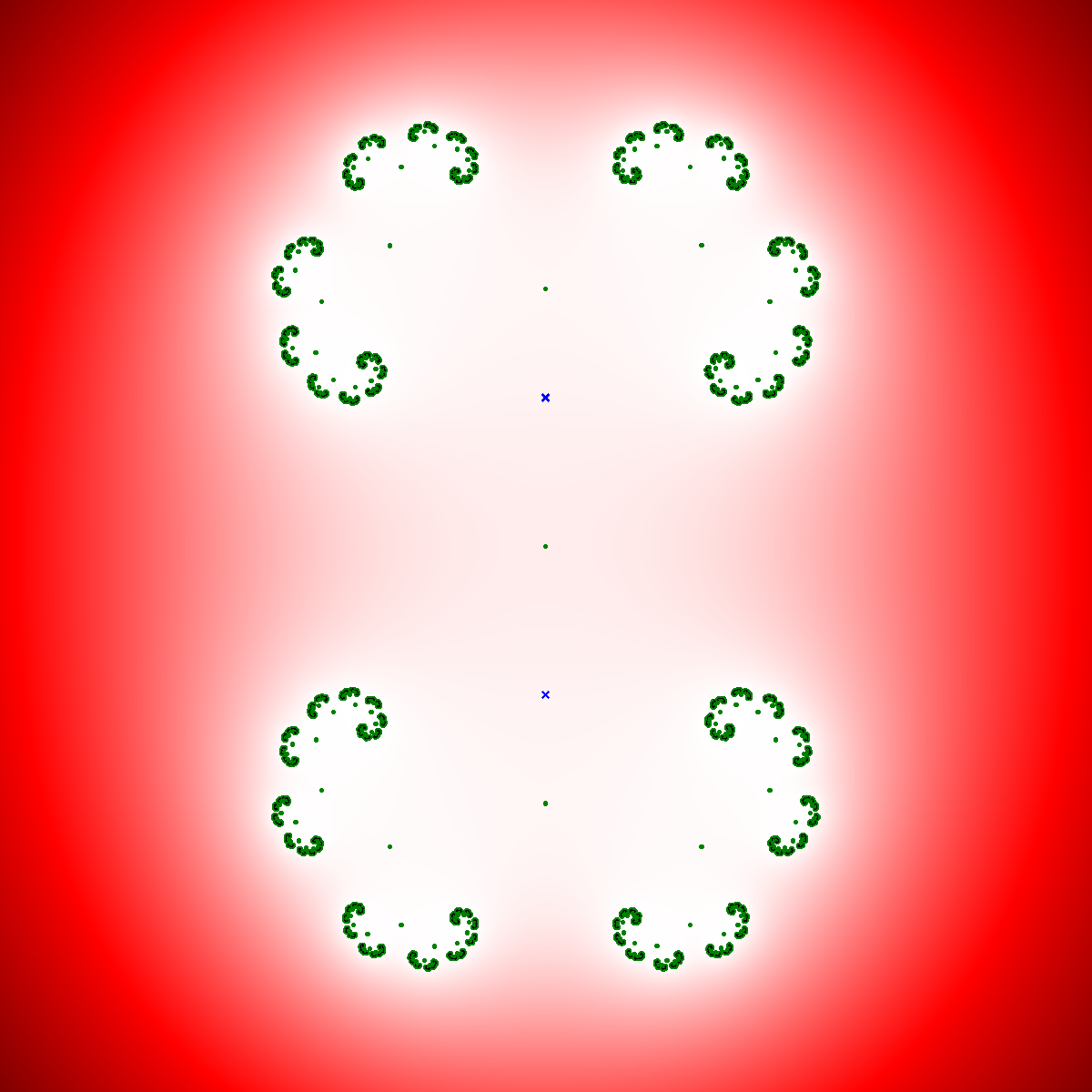} 
   \hfill \includegraphics[width=0.48\textwidth]{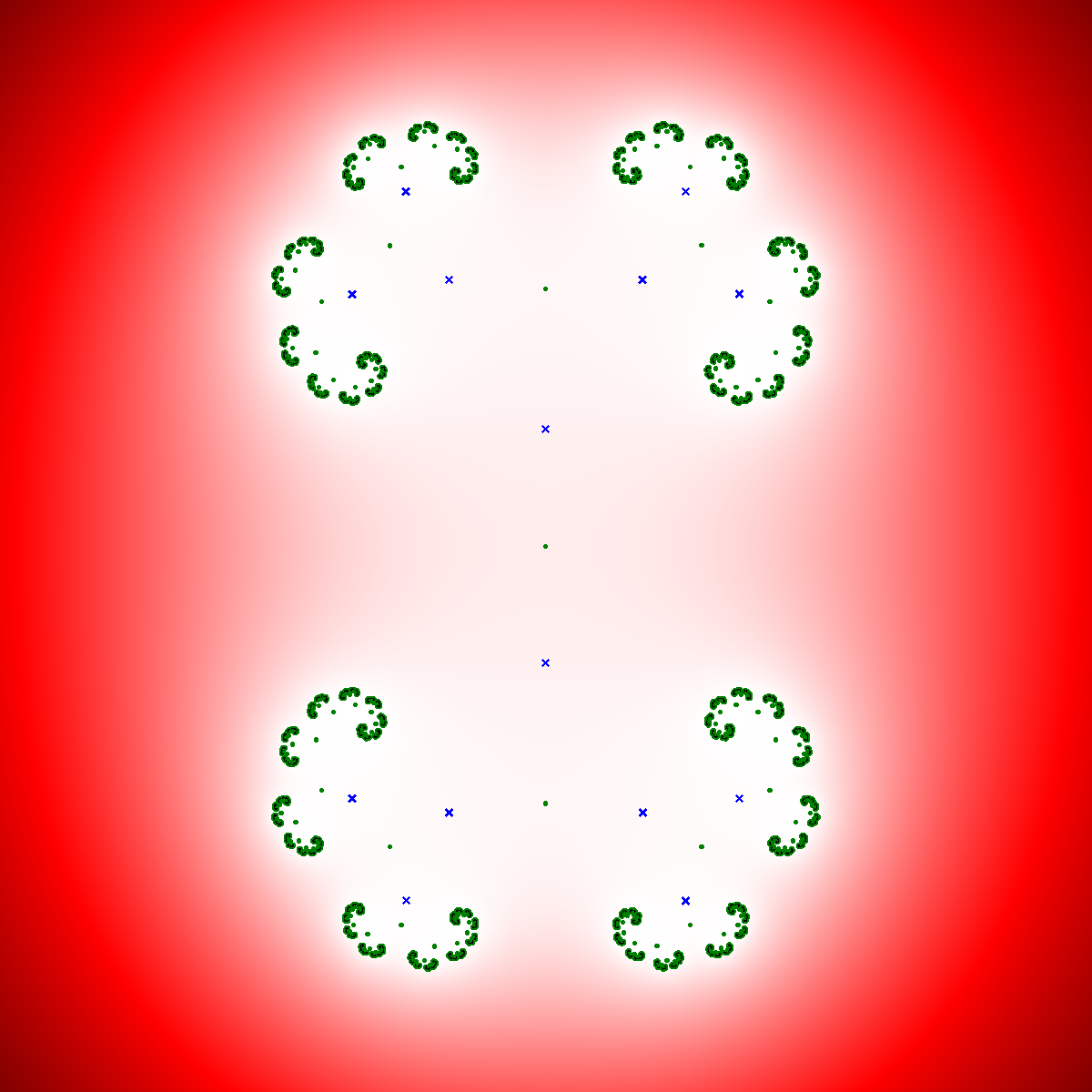} \\
   \includegraphics[width=0.48\textwidth]{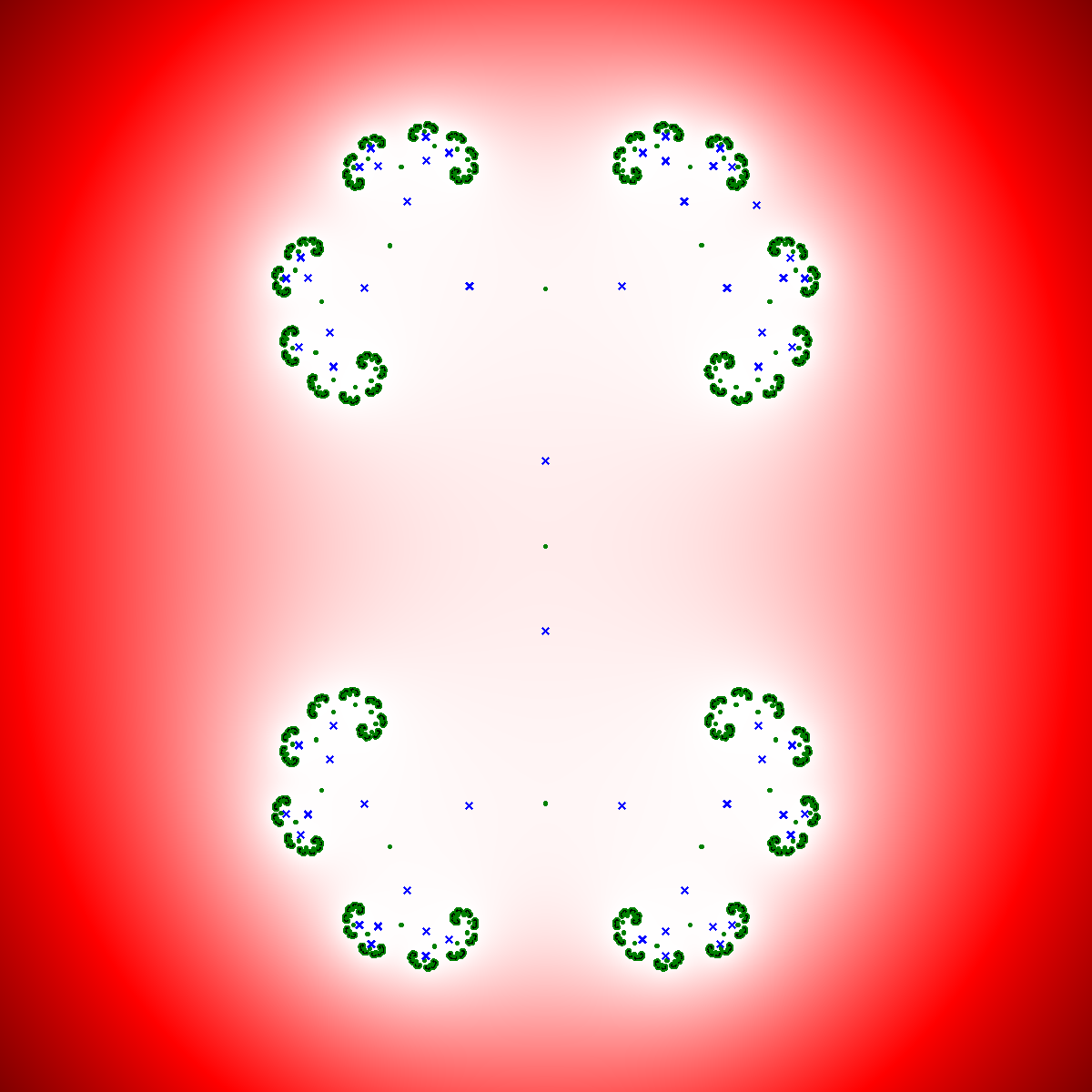} 
   \hfill
   \includegraphics[width=0.48\textwidth]{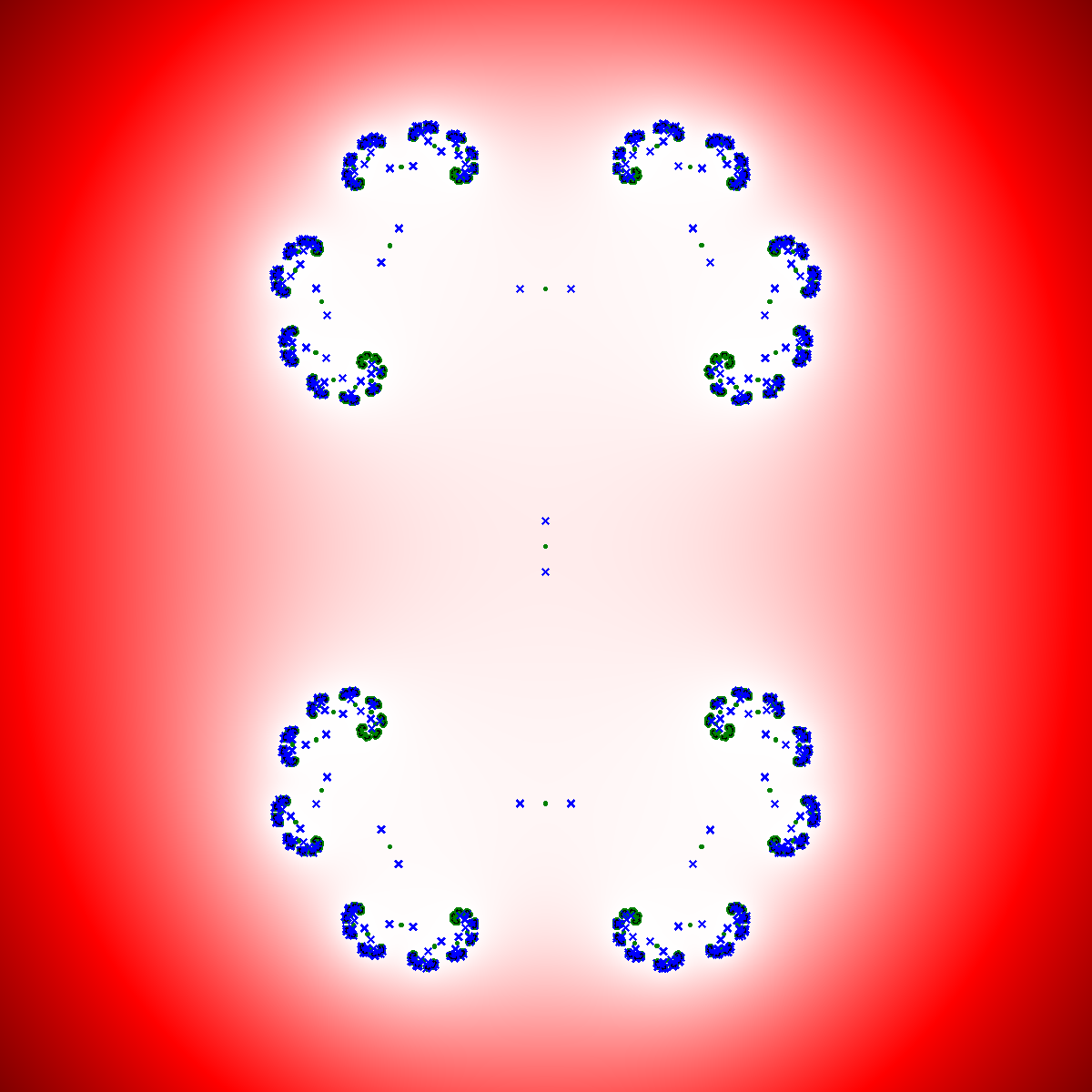} 
\caption{We illustrate the dynamics of $P(z)=z^2+\frac{1}{2}$
together with the roots of the second derivative of $P^k$,
for $k = 2, 4$ (first row) and $k = 6, 10$ (second row).
Every picture corresponds to the region
$\{x +iy : -3/2 < x < 3/2, -3/2 < y < 3/2\}$.
The filled-in Julia set $K(P)$ is shown in black.
It is well-known that it is a Cantor set for this particular polynomial.
The critical points of the Green's function $g_\Omega$ is shown
in green, whereas the value of $g_\Omega$ is suggested by shades
of red.
Finally, the roots of the second derivative of $P^k$ are marked with
blue crosses.
A consequence of Proposition~\ref{pro:dyn}
is that
there will be two roots of $\frac{dP^k}{dz^2}$ converging
to each critical point of $g_\Omega$ as $k\to \infty$.
Looking at the critical point at the origin, it does seem
that two critical points of $g_\Omega$, lying on the imaginary
axis, get closer and closer to $0$ as $k$ is increased.
} \label{fig:dyn}
\end{figure}

\paragraph{Families of orthogonal polynomials.}
Let $\mu$ be a Borel probability measure whose
support $K \subset \C$ is a compact and infinite set,
and consider the family $(q_k)_k$ of orthogonal
polynomials associated to $\mu$.
Let $U$ be the unbounded connected component of
$\C\setminus K$.
Then $U$ is root-sparse for the sequence $(q_k)_{k}$.
This was originally shown
by Fejér \cite{Fejer}; see also
\cite[Lemma 1.1.3]{StahlTotik}.
In particular, any limit point $\nu$ of the pre-compact
family $\mu_k$ has support in $K$.
This leads to the following proposition.
\begin{prop}\label{pro:ortho}
Suppose that a subsequence $\mu_{k_j}$
of root distributions converges to $\nu$.
Then
  \[
    \xi_{k_j,m} - \xi_{k_j} 
    \underset{k\to\infty}{\longrightarrow}\ 
    m \xi_{p_\nu'} 
    \text{ on } \Omega\text{, for } m = 0, 1, 2, \ldots
  \]
\end{prop}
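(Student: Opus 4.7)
The plan is to reduce the proposition to a direct application of \thmref{main} for the subsequence $(\mu_{k_j})_j$, with the open set in that theorem taken to be $\Omega$. Two hypotheses must be verified: that $\Omega$ is root-sparse for $(q_{k_j})_j$, and that $p_\nu$ is nowhere locally constant on $\Omega$.

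Root-sparsity of $\Omega$ for the full sequence $(q_k)_k$, and hence for any subsequence, is exactly the classical Fej\'er-type result invoked in the paragraph immediately preceding the proposition, so nothing further is required there.

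For the nowhere-locally-constant hypothesis, I would first observe that $\Supp(\nu)\subset\C\setminus\Omega$. Indeed, root-sparsity yields $\mu_k^\#(L)\leq M(L)$ uniformly in $k$ for every compact $L\subset\Omega$, so $\mu_k(L)=\mu_k^\#(L)/n_k\to 0$; combined with the weak* convergence $\mu_{k_j}\to\nu$, this forces $\nu(L)=0$ for all such $L$, whence $\Supp(\nu)\cap\Omega=\emptyset$. Consequently $p_\nu$ is harmonic on $\Omega$. Next, since $\nu$ is a probability measure (being a weak* limit of probability measures whose supports stay in a fixed compact set by Fej\'er's theorem), the asymptotic $p_\nu(z)=\log|z|+o(1)$ as $z\to\infty$ shows that $p_\nu$ is non-constant on the unbounded connected set $\Omega$. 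Because a harmonic function on a connected open set that is constant on some open subset must be globally constant there (by the identity principle for harmonic functions), $p_\nu$ cannot be locally constant anywhere in $\Omega$.

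Both hypotheses of \thmref{main} being in place, the theorem applied to the convergent subsequence $(\mu_{k_j})_j$ yields the stated divisor convergence on $\Omega$. The main point where care is needed is extracting that $p_\nu$ is harmonic on $\Omega$ from root-sparsity alone (rather than from an a priori assumption $\Supp(\nu)\subset K$); once this is established, the behavior at infinity combined with the identity principle finishes the verification.
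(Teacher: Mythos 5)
Your proposal is correct and follows the same route the paper takes: the paper's one-line proof simply invokes the Fej\'er root-sparsity of $\Omega$ and the fact that any limit $\nu$ has support off $\Omega$, then applies \thmref{main} to the convergent subsequence. You have spelled out exactly the details the paper treats as immediate, in particular the verification that $p_\nu$ is harmonic and nowhere locally constant on the unbounded connected set $\Omega$ via the $\log|z|+o(1)$ asymptotic and the identity principle for harmonic functions.
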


\begin{proof}
    The proposition follows immediately from the
    preceeding remarks and
    \thmref{main}.
\end{proof}

\paragraph{Extremal families in the sense of Widom.}
Our next application deals with a quite general class of extremal
families of polynomials introduced by Widom, see \cite{Widom}. 

Let $K\subset\C$ be a compact, non-polar set. A measure $\mu$ on $K$
is called \emph{admissible} (in the sense of Widom) if there exists a
family of closed subsets  $K_t\subset K, 0<t<t_0$ which satisfies
\begin{enumerate}
       \item $\lim_{t\to 0}c(K_t)=c(K)$
    \item $\forall A\subset K$ satisfying $\mu(A)=0$:
    $\liminf_{t\to 0}\omega_t(A\cap K_t)=0$,
\end{enumerate}
where $\omega_t$ denotes the equilibrium measure on $K_t$ and $c(K_t)$, 
$c(K)$ denote the logarithmic capacities of the respective sets. 
Widom remarks that in the simplest case, where $K_t=K$ for all $t$, condition~2. is equivalent to $\omega_K \ll \mu$, i.e. $\omega_K$ is absolutely continuous with respect to $\mu$. 

Let $\mu$ be a measure on $K$, with infinite support. Suppose $\Phi:\R_+\cup {0}\to \R_+\cup {0}$ is a non-negative, continuous and increasing function, such that 
\[
s_n=o(t_n)\Rightarrow \Phi(s_n)=o(\Phi(t_n)).
\] 
For example $\Phi_p(t) = t^p$, where $p>0$.

Given $\mu$ and $\Phi$, Widom considers for $k\in\N$ 
the minimum problem 
$$m_k=\inf \int \Phi(|z^k+a_1z^{k-1}+ \ldots + a_k|)d\mu(z), $$
where the infimum is taken over all $a_1, \ldots ,a_k\in \C$. 
He shows that for each $k$ the infimum is a minimum 
realized by some (monic) polynomial $q_k$ of degree $k$.
We call such a family of polynomials $(q_k)_k$ a Widom-extremal family. 
Examples include the extremal families in $L^p(\mu)$, $p>0$ 
such as the sequence of polynomials orthogonal with respect to $\mu$, 
(i.e.~$p=2$).

The following result is a corollary of \thmref{maintechnical} and results of Widom \cite{Widom}.
\begin{prop}
Let $K\subset \C$ be a compact set, let $\Omega$ denote the unbounded connected component of $\C\setminus K$ and $g_\Omega$ the corresponding Green's function with pole at $\infty$. 
Suppose $\mu$ is an admissible measure on $K$, 
and $(q_k)_k$ is a Widom-extremal family.
Then 
  \[
    \xi_{k,m} - \xi_k 
    \underset{k\to\infty}{\longrightarrow}\ m \xi_{p_\mu'}, 
    \text{ on } \Omega\text{ for } m = 0, 1, 2, \ldots
  \]
\end{prop}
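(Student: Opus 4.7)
The plan is to deduce this proposition as a corollary of \thmref{maintechnical}, applied with $U=\Omega$ and $p=g_\Omega$. Since $g_\Omega=p_\omega-I(\omega)$ for the equilibrium measure $\omega$ of $K$, the holomorphic function $p'=g_\Omega'=p_\omega'$ coincides with the $p_\mu'$ appearing in the statement once this is interpreted as $p_\omega'$ (the additive constant $I(\omega)$ disappears upon differentiation). Two ingredients from Widom's paper \cite{Widom} will supply the two hypotheses of \thmref{maintechnical}.

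For hypothesis \itemref{uniformboundroots}, root-sparsity of $\Omega$, I would argue as follows. Admissibility of $\mu$ combined with $\lim_{t\to 0}c(K_t)=c(K)$ yields Widom's asymptotic $m_k^{1/k}\to c(K)$, whence $\tfrac{1}{k}\log\|q_k\|_{\mu,\Phi}$ converges to $\log c(K)$. If a positive-density subsequence of the zeros of $q_k$ were trapped in some compact $L\subset\Omega$, one could factor those roots out and compare the resulting polynomial with a Widom-extremal of lower degree for $K$, producing a lower bound on $m_k^{1/k}$ strictly exceeding $c(K)$. This contradiction forces the number of zeros of $q_k$ inside any compact subset of $\Omega$ to be locally uniformly bounded, as in the Fej\'er-type argument for orthogonal polynomials cited above.

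For hypothesis \itemref{balayage}, the convergence of shifted potentials, I would rely on Widom's theorem that, under admissibility,
\[
\tfrac{1}{k}\log|q_k(z)|\longrightarrow g_\Omega(z)+\log c(K)
\]
locally uniformly on $\Omega$ outside a discrete set $\Exc$ with no accumulation in $\Omega$ (coming from the finitely many stray zeros of $q_k$ in $\Omega$ handled by step one). Since $p_{\mu_k}(z)=\tfrac{1}{k}\log|q_k(z)|$, taking the constant sequence $d_k=-\log c(K)$ gives $d_k+p_{\mu_k}\to g_\Omega$ locally uniformly on $\Omega\setminus\Exc$, and no further subsequence extraction is required: we may take $N_2=N_1$ throughout.

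With both conditions verified, \thmref{maintechnical} immediately yields $\xi_{k,m}-\xi_k\to m\xi_{g_\Omega'}$ on $\Omega$, which is the claimed conclusion. The main obstacle is not structural but bibliographic: \cite{Widom} proves several closely related asymptotic theorems under varying hypotheses, and one must identify precisely the statement that gives the locally uniform convergence of $\tfrac{1}{k}\log|q_k|$ to $g_\Omega+\log c(K)$ on $\Omega$ modulo a discrete set. Verifying that the exceptional set indeed has no accumulation in $\Omega$ is where admissibility does its real work, since it is exactly the freedom to approximate $K$ by the inner sets $K_t$ without losing capacity that lets one transfer Widom's control on $K$ out to the unbounded component $\Omega$.
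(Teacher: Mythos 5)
Your overall strategy is exactly the paper's: verify the two hypotheses of \thmref{maintechnical} for $U=\Omega$ and $p=g_\Omega$ (up to the irrelevant additive constant $\log c(K)$, absorbed either into $p$ or into $d_k$), using Widom's results for both. Your reading of $\xi_{p_\mu'}$ as $\xi_{p_\omega'}=\xi_{g_\Omega'}$ is also the intended one. For root-sparsity the paper simply invokes \cite[Lemma 4]{Widom} rather than re-deriving it; your Fej\'er-type sketch is plausible but is not carried out in enough detail to count as a proof, so you should cite Widom's lemma directly.

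The genuine gap is in your treatment of condition \itemref{balayage} What Widom's corollary actually provides is
\[
\tfrac{1}{k}\log|q_k(z)|\to g_\Omega(z)+\log c(K)
\]
locally uniformly on the complement of the \emph{convex hull} of $K$, not on $\Omega$ minus a discrete set. The whole point of the exceptional set $\Exc$ and of the subsequence $N_2\subset N_1$ in condition \itemref{balayage} is to propagate this convergence from the complement of the convex hull into the rest of $\Omega$, where $q_k$ may have stray zeros. You cannot take $N_2=N_1$ in general: one first passes to a subsequence along which the (locally uniformly boundedly many, by root-sparsity) zeros of $q_k$ in $\Omega$ converge, defines $\Exc$ as the set of their limits, establishes convergence of the Cauchy transforms $p_{\mu_k}'\to p'$ off $\Exc$, and then recovers convergence of the potentials via the integral representation \eqref{eq:recuperatepot} together with the harmonic extension argument of Lemma~\ref{BalayagePotConv} (this is the argument carried out in the proof of \thmref{main} and referenced in \cite{HPU3}). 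Your proposal asserts the conclusion of this step as if it were Widom's theorem itself, which it is not; that extension is precisely where the remaining work lies.
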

\begin{proof}
 Widom shows in \cite[Lemma 4]{Widom} that Condition \itemref{uniformboundroots}\ of Theorem~\ref{maintechnical} is satisfied. 
 
 Moreover, in \cite[Corollary p. 1007]{Widom} he shows that $\lim_{k\to\infty}\frac{1}{k}\log |q_k(z)|=g_\Omega(z)+\log c(K) =
 p_\omega(z)$ locally uniformly on the complement of the convex hull of $K$. 

It follows that for any infinite set $N_1\subset\N$ there exist an infinite subset $N_2\subset N_1$ and a subset $\Exc\subset \Omega$
without accumulation points in $\Omega$ such that 
\[
    \frac{1}{k}\log |q_k(z)|\to g_\Omega+\log c(K),\quad
    \textrm{locally~unifomly in $\Omega\setminus\Exc$ as $k\to\infty$ in $N_2$},
\]
i.e. that Condition \itemref{balayage}\ is satisfied with $p=g_\Omega+c(K)$ and $d_k=0$.
The proof is left to the reader. 
For a similar proof see the proof of \cite[Proposition 2]{HPU3}.
\end{proof}

%
\section{Proofs of theorems}\label{sec:proofs}
%

We prove Theorem~\ref{maintechnical} and derive Theorem~\ref{main}. 
We start with a lemma. 

\begin{lemma}\label{BalayagePotConv}
Assume condition \itemref{balayage}\ in Theorem~\ref{maintechnical} holds and
$W \subset U$ is an open and bounded set
such that $q_k$ is non-vanishing 
on $W$ for $k \in N_2$ sufficiently big.
  Then locally uniformly on $W$
    \begin{enumerate}
    \item $d_k + \frac{1}{n_k}\log\left\lvert q_k\right\rvert \to p$
    as $k \to \infty$ in $N_2$ and
    \item $\frac{q_k'}{n_k q_k} \to p'$ as $k \to \infty$ in $N_2$.
  \end{enumerate}
  
  Furthermore, if $K\subset W$ is a compact set containing no zeros of $p'$, then $q_k'$ has no zero in $K$ for $k$ sufficiently big. 
\end{lemma}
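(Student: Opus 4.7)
The plan is to realize that the three claims follow from the hypothesis by elementary potential-theoretic book-keeping, with the one genuine subtlety being how to upgrade the convergence from $U\setminus\Exc$ to all of $W$.

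First I would observe that since $\mu_k = \frac{1}{n_k}\sum_j \delta_{z_{k,j}}$, one has $p_{\mu_k}(z) = \frac{1}{n_k}\log|q_k(z)|$ as soon as this quantity is finite. Hence claim (1) is really the statement that $h_k := d_k+p_{\mu_k}\to p$ locally uniformly on $W$, whereas condition \itemref{balayage}\ only delivers this convergence on $U\setminus\Exc$. The point is that inside $W$ the functions $h_k$ are genuinely harmonic (not just subharmonic), because $q_k$ is non-vanishing there, so we are trying to extend convergence of harmonic functions across the discrete set $\Exc\cap W$.

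The mechanism I would use is a Poisson integral argument. Since $\Exc$ has no accumulation points in $U$, the set $\Exc\cap W$ is discrete, and around any $z_0\in W$ I can choose $r>0$ with $\overline{D(z_0,r)}\subset W$ and $\partial D(z_0,r)\cap\Exc=\emptyset$. On $\partial D(z_0,r)$ the convergence $h_k\to p$ is uniform, and the Poisson integral formula applied to the harmonic functions $h_k$ on $\overline{D(z_0,r)}$ then produces uniform convergence of $h_k$ to the Poisson extension of $p|_{\partial D(z_0,r)}$ on any smaller concentric disk; but that Poisson extension equals $p$, since $p$ is already harmonic on $D(z_0,r)$. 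Covering an arbitrary compact subset of $W$ by finitely many such disks gives claim (1).

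Claim (2) then reduces to the standard fact that locally uniform convergence of harmonic functions forces locally uniform convergence of their $\partial_z$-derivatives; this is immediate from Cauchy's integral formula applied on small circles (or, equivalently, by differentiating the Poisson kernel). Since $d_k$ is a constant and $(p_{\mu_k})'=\frac{1}{n_k}\frac{q_k'}{q_k}$ by \eqref{eq:cauchytransform} applied to $\mu_k$, this yields $\frac{q_k'}{n_k q_k}\to p'$ locally uniformly on $W$. Finally, for the last assertion, $|p'|$ is continuous on the compact set $K\subset W$ and by assumption nowhere zero there, so it is bounded below by some $\delta>0$; the locally uniform convergence from (2) then gives $\bigl|\tfrac{q_k'}{n_k q_k}\bigr|>\delta/2$ on $K$ for large $k$, which in particular forces $q_k'\neq 0$ on $K$. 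The main obstacle is the harmonic-extension step in the first paragraph; everything else is a routine consequence.
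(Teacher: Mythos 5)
Your proposal is correct and follows essentially the same route as the paper: reduce to a small disk around each point of $\Exc\cap W$ whose boundary circle avoids $\Exc$, use uniform convergence of the harmonic functions $d_k+p_{\mu_k}$ on that circle to upgrade to uniform convergence on the disk (the paper invokes the maximum principle where you use the Poisson integral — the same fact), then differentiate to get (2) and conclude the last assertion from a positive lower bound for $\lvert p'\rvert$ on $K$.
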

Notice that part 1.~of the lemma implies that 
we can always assume that the exceptional set $\Exc$ 
in condition \itemref{balayage}\ only
consists of accumulation points of zeros of the family $(q_k)_k$, 
$k\in N_2$.

\begin{proof}
With $N_1\supset N_2$ and $\Exc\subset U$ as in
Condition~\itemref{balayage}\  
it is enough to prove the first statement, then the second follows 
 by differentiation (see \cite[Lemma 1]{HPU3} for details). 
Moreover it is enough to prove that if $z_0\in\Exc\cap W$, 
then $d_k + \frac{1}{n_k}\log\left\lvert q_k\right\rvert 
= d_k+p_{\mu_k} \to p$  
uniformly on a compact neighbourhood of $z_0$ in $W$ as $k \to \infty$ in $N_2$. 

So suppose $z_0\in\Exc\cap W$ and choose $r>0$ 
such that $\overline{\D}(z_0, r)\subset W$ and 
$\Exc\cap\overline{\D}(z_0, r) = \{z_0\}$. 
This is possible since $W\subset U$ is open 
and $\Exc$ is without accumulation points in $U$. 
Then by condition \itemref{balayage}\  
the harmonic functions $d_k+p_{\mu_k} - p$ on $W$ 
converge uniformly to the constant function $0$ 
on $\{z : \lvert z - z_0\rvert = r\}$. 
Whence by the maximum principle for harmonic functions 
the convergence is uniform on the closed disk $\overline{\D}(z_0, r)$.

Finally, if $K\subset W$ is a compact set containing no zeros of $p'$, then 
\[\frac{q_k'(z)}{n_k q_k(z)}= p'(z)+ o(1)=p'(z)(1+o(1))\]
uniformly on $K$ as $k \to \infty$ in $N_2$, so that 
\[q_k'(z)=n_k q_k(z)p'(z)(1+ o(1)),\]
whence $q_k'$ has no zero in $K$ for $k$ sufficiently big. 
\end{proof}

We prove Theorem~\ref{maintechnical} in two steps. 
First we prove the conclusion of the Theorem for $m=1$ in Proposition~\ref{ResultFirsderivatives}. 
Secondly we prove in Proposition~\ref{Hereditary} 
that the conditions \itemref{uniformboundroots}\ and
\itemref{balayage}\ are \emph{hereditary}, 
i.e. if the conditions are satisfied for a sequence of polynomials
$(q_k)_k$, then they are satisfied for the sequence of derivatives $(q'_k)_k$.
%
%
\begin{prop}\label{ResultFirsderivatives}
Theorem~\ref{maintechnical} holds when $m=1$. 
That is, under the assumptions of Theorem~\ref{maintechnical}
we have convergence of divisors on $U$
  \[
    \xi_{k,1} - \xi_k 
    \to \xi_{p'}\qquad\textrm{as}\qquad k\to\infty. 
  \]
\end{prop}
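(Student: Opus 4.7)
The plan is to apply the argument principle to the meromorphic function $q_k'/(n_kq_k)$ on small disks in $U$. Since condition~\itemref{balayage}\ is only subsequential, fix any infinite $N_1\subset\N$ and extract $N_2\subset N_1$, a sequence $(d_k)_{k\in N_2}$ and an exceptional set $\Exc$ as given. By the remark following Lemma~\ref{BalayagePotConv}, I may assume $\Exc$ consists exactly of the accumulation points in $U$ of the zeros of $(q_k)_{k\in N_2}$, so that $\Exc$ has no accumulation points in $U$. Since $p$ is nowhere locally constant, the holomorphic function $p'$ is not identically zero on any component of $U$, and its zero set is discrete in $U$.

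Fix $z_0\in U$ and $r_0>0$ with $\overline{D}(z_0,r_0)\subset U$. For all $r\in(0,r_0)$ outside an at most countable set, the circle $\partial D(z_0,r)$ avoids both $\Exc$ and the zeros of $p'$. Fix such an $r$ and choose an open neighborhood $V$ of $\partial D(z_0,r)$ with $\overline{V}\subset U\setminus\Exc$. By the defining property of $\Exc$, for all sufficiently large $k\in N_2$ the polynomial $q_k$ has no zeros in $V$; Lemma~\ref{BalayagePotConv} therefore yields $q_k'/(n_kq_k)\to p'$ locally uniformly on $V$, and Weierstrass's theorem on derivatives of uniformly convergent holomorphic sequences gives $(q_k'/(n_kq_k))'\to p''$ locally uniformly on $V$. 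Because $p'$ is nonzero on $\partial D(z_0,r)$, the function $q_k'/(n_kq_k)$ has no zeros or poles on $\partial D(z_0,r)$ for all sufficiently large $k\in N_2$, and the argument principle yields
\[
(\xi_{k,1}-\xi_k)(D(z_0,r)) = \frac{1}{2\pi i}\oint_{\partial D(z_0,r)}\frac{(q_k'/(n_kq_k))'(z)}{(q_k'/(n_kq_k))(z)}\,dz \longrightarrow \frac{1}{2\pi i}\oint_{\partial D(z_0,r)}\frac{p''(z)}{p'(z)}\,dz = \xi_{p'}(D(z_0,r))
\]
as $k\to\infty$ in $N_2$. Being a convergent integer-valued sequence, the left-hand side eventually equals $\xi_{p'}(D(z_0,r))$.

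To pass from this ``disk convergence'' to weak* convergence, I first derive local boundedness: any compact $K\subset U$ is covered by finitely many good disks $D_i$ as above, so
\[
\xi_{k,1}(K)\le\sum_i\xi_{k,1}(D_i) = \sum_i\bigl[(\xi_{k,1}-\xi_k)(D_i)+\xi_k(D_i)\bigr]
\]
is bounded uniformly in $k\in N_2$ by the disk convergence established above together with root-sparsity of $\xi_k$. Consequently the total variation of $\xi_{k,1}-\xi_k$ is locally bounded along $N_2$. Given $f\in\cont_c(U)$, a standard approximation argument (approximating $f$ uniformly on its support by linear combinations of indicator functions of finitely many disjoint good disks, whose boundary circles carry zero mass for $\xi_{p'}$) then upgrades the disk convergence to $(\xi_{k,1}-\xi_k)(f)\to\xi_{p'}(f)$ as $k\to\infty$ in $N_2$. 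Since the limit $\xi_{p'}$ is independent of the original choice of $N_1$, the standard subsequence principle yields convergence along the full sequence $k\in\N$.

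The main obstacle is that the zeros of $(q_k)_k$ may be dense in $U$ as $k$ varies, so no single contour avoids them for all large $k$. The crucial device is the freedom to vary $r$ together with the identification of $\Exc$ as the accumulation locus of the zeros: these ensure that for every generic radius no zero of $q_k$ sits on $\partial D(z_0,r)$ for large $k\in N_2$, which is exactly what is required to apply the argument principle and let the uniform convergence from Lemma~\ref{BalayagePotConv} do the remaining work.
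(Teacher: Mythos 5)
Your route is genuinely different from the paper's. The paper fixes a test function $\phi$, orders the roots of $q_k$ by signed distance to $\Supp(\phi)$, extracts by a diagonal argument the finite set $E$ of limit positions of roots in $\Supp(\phi)$ and the finite set $F$ of critical points of $p$ there, and then applies Rouch\'e's theorem on small disks around each point of $E\cup F$ to match the zeros of $q_k'$ with those of $q_kp'$; uniform continuity of $\phi$ then finishes the estimate. You instead integrate the logarithmic derivative of $q_k'/(n_kq_k)$ over generic circles, and the uniform convergence from Lemma~\ref{BalayagePotConv} correctly gives that $(\xi_{k,1}-\xi_k)(D)=\xi_{p'}(D)$ for every good disk $D$ and all large $k\in N_2$. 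This is the same complex-analytic mechanism as Rouch\'e, but your version never needs to locate the limit positions of individual roots, only to steer contours away from the two discrete sets $\Exc$ and the zero set of $p'$. (One justification should be tightened: the remark after Lemma~\ref{BalayagePotConv} gives $\Exc\subset\{\text{accumulation points of zeros}\}$, not the reverse inclusion you use. The fix is immediate, though: on a compact $\overline V\subset U\setminus\Exc$ the uniform convergence of $d_k+p_{\mu_k}$ to the finite function $p$ forbids zeros of $q_k$ in $\overline V$ for large $k\in N_2$, since a zero would force $p_{\mu_k}=-\infty$ there; in particular the accumulation locus along $N_2$ is automatically contained in $\Exc$.)

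The one genuine gap is the final upgrade from disk convergence to weak* convergence. You propose to approximate $f$ uniformly by linear combinations of indicators of finitely many \emph{disjoint} good disks, but disjoint disks cannot exhaust $\Supp(f)$: the uncovered set has nonempty interior, and on it you control $\absv{\xi_{k,1}-\xi_k}$ only by the uniform bound coming from root-sparsity, which is not small. So the error term in the approximation does not go to zero with the mesh. This is repairable in a standard way — partition a neighbourhood of $\Supp(f)$ into small half-open squares whose boundaries avoid $\Exc$ and the zeros of $p'$ (the argument principle applies verbatim to square contours), or localize as the paper does around the finitely many atoms of the limit divisor in $\Supp(f)$ and use that $\xi_{k,1}\ge 0$ off $\Exc$ to rule out cancelling atoms elsewhere — but as written the density argument does not close. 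The local boundedness step and the concluding subsequence principle are fine.
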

\begin{proof}
Let $\phi\in\cont_c(U)$ be arbitrary. We must show that 
\begin{equation}\label{equationconvergenceWeil}
    \xi_{k,1}(\phi) - \xi_k(\phi) 
    - \xi_{p'}(\phi)\to 0,\qquad\textrm{as }k\to\infty. 
\end{equation}
It is enough to show that for any infinite subset $N_1\subset\N$, 
there is a further infinite subset $N_2\subset N_1$ such that 
\eqref{equationconvergenceWeil} holds for $k \to \infty$ in $N_2$. 
So let $\epsilon>0$ be arbitrary and let $N_1\subset\N$ be an arbitrary infinite subset.

Let $K=\Supp(\phi)$ and define a 
function 
$v$ on $\C$ by 
\[
v(z)=\begin{cases}
-d(z, \partial K) \text{ for } z\in K\\
\phantom{-}d(z, \partial K) \text{ for } z\notin K,
\end{cases}
\]
where $d(\cdot,\cdot)$ denotes the euclidean distance. The function $v$ thus gives the signed distance to $\partial K$. For $\delta>0$ let $K_\delta := \{z \in\C: v(z)\leq \delta\}$, 
$2\delta_0 := d(K,\partial U) = \inf\{v(z) : z\in \C\setminus U\}$. 
Let $M = M(K_{\delta_0})$ be the uniform upper bound 
on the number of zeros in $K_{\delta_0}$ given by condition~\itemref{uniformboundroots}

Let $(z_{k,j})$, $j=1, \ldots, n_k$ denote the roots of $q_k$ repeated with respect to multiplicity. We can arrange the zeros $(z_{k,j})_{j=1}^{n_k}$ of $q_k$, 
so that 
$v(z_{k,j}) \leq v(z_{k,j+1})$ for all $1\leq j< n_k$. 
Then starting from $N_1$ we can find a 
subset $N_2\subset N_1$ such that the first $M+1$ roots converge as $k\to\infty$ in $N_2$. We let $z_j=\lim_{\stackrel{k\to\infty}{k\in N_2}} z_{k,j}$. 
Notice that $v(z_j)\leq v(z_{j+1})$ for $1\leq j\leq M$ and $v(z_{M+1})\geq \delta_0$, so that $z_{M+1}\notin K$. 
If $z_1\notin K$ set $m=0$. 
Otherwise let $m$, $1\leq m\leq M$, be maximal such that 
$z_j\in K$ for $j\leq m$. 
Let $E := \{ z_j : 1\leq j \leq m\}$, then by Lemma~\ref{BalayagePotConv} we can assume that $\Exc\cap K = E$. 
Let $w_1, w_2, \ldots$ be a labeling of the critical points of $p$ repeated with multiplicity and with $v(w_j)\leq v(w_{j+1})$. 
If $w_1\notin K$ we let $l=0$ and otherwise we let $l$ be maximal 
with $w_l\in K$.
Let $F := \{w_j : 1\leq j \leq l\}$ and $N=m+l$. 

Since $\phi$ is continuous with compact support, it is uniformly continuous. 
Let $\delta_1>0$ be such that 
\begin{equation}\label{eq:uniformcont}
    |\phi(x)-\phi(y)|<\epsilon/N, 
\text{ when } |x-y|\leq \delta_1.
\end{equation}
Define 
$$
\delta_2 = \frac{1}{4}\min( \{|x-y| : x\not= y, x, y\in E\cup F\}\cup
\{\delta_0, v(z_{m+1}), \delta_1, v(w_{l+1})\}),
$$
where we define $v(w_{l+1}) = \infty$, 
if $F$ contains all critical points of $p$.

Let $k_0$ be such that $|z_{k,j}-z_j|< \delta_2$ for all $k\geq k_0$, $k\in N_2$, and all $1\leq j \leq m+1$.
We claim that for $k \geq k_0$ and $k\in N_2$ 
\[
q_k^{-1}(0)\cap K \subset  
q_k^{-1}(0)\cap K_{3\delta_2}=\{z_{k,1}, \ldots z_{k,m}\}.
\]
If $j>m$ then $v(z_{k,j})\geq v(z_{k,m+1})>v(z_{m+1})-\delta_2\geq 3\delta_2$, 
whence $q_k^{-1}(0)\cap K_{3\delta_2}\subset\{z_{k,1}, \ldots z_{k,m}\}$.
Furthermore since $v(z_{k,m})< v(z_{m})+\delta_2\leq \delta_2$ the claim follows.

Let $D_z = \D(z, 2\delta_2)$.\\
\noindent\textbf{Claim 1.} For every $z\in E\cup F$ the number of zeros 
of $q_k'$ in $D_z$ equals the number of zeros of $q_k p'$ in $D_z$, 
all zeros counted with multiplicity.

\noindent\textbf{Proof of Claim 1.}  Define nested neighborhoods 
$D_1\subset D_2$ of $E\cup F$ by
$$
D_1 = \bigcup_{z\in E\cup F}\D(z,\delta_2) 
\qquad\text{and}\qquad
D_2 = \bigcup_{z\in E\cup F}D_z.
$$
Moreover define the compact set 
$$
L_2 :=
K_{2\delta_2}\setminus D_2
$$
and notice that $L_2$ contains
$$
\partial D_2 = \bigcup_{z\in E\cup F}\partial D_z.
$$
Define an open neighborhood $W$ of $L_2$ by
$$
W :=\overset{\circ}{K}_{3\delta_2} \setminus 
\overline{D}_1.
$$
By construction $p'$ and each $q_k$, for $k \geq k_0$ and $k\in N_2$, 
do not vanish on $\overline{W}$. 
Combining this with Lemma~\ref{BalayagePotConv} 
shows that $q_k'$ has no zero in $\overline{W}$ 
for $k\in N_2$, $k\geq k_0$, increasing $k_0$ if necessary. 

Let 
\[
  \rho := \min\{|p'(x)| : x\in \partial D_2\},
\]
then, further increasing $k_0$ if necessary, we can assume by 2.~in Lemma~\ref{BalayagePotConv} that 
\[
\rho > \sup\left\{\left|\frac{q_k'(x)}{n_kq_k(x)} - p'(x)\right| : 
 x\in \partial D_2\right\},
\]
for $k\geq k_0$, $k\in N_2$.
The claim is now an immediate consequence of Rouché's theorem applied to each $D_z$. 

\noindent\textbf{Claim 2.} For every $z\in E\cup F$ 
$\lvert \phi(x) - \phi(y) \rvert < \epsilon/N$ for every $x, y\in D_z$

\noindent\textbf{Proof of Claim 2.} 
The claim follows from \eqref{eq:uniformcont} and the bound 
$diam(D_z) = 4 \delta_2 \leq \delta_1$.

Hence, when $k \geq k_0$, $k\in N_2$
\begin{align*}
\lvert\xi_{q_k'}(\phi)-\xi_{q_k}(\phi)-\xi_{p'}(\phi)\rvert 
&=
\left|\sum_{z\in E\cup F}
\left(\sum_{\substack{x\in D_z\\q_k'(x)=0}}\phi(x)\quad - 
\sum_{\substack{y\in D_z\\ (q_kp')(y)=0}}\phi(y)\right)\right|\\
&\leq \sum_{z\in E\cup F}
\left|\sum_{\substack{x\in D_z\\q_k'(x)=0}}\phi(x)\quad - 
\sum_{\substack{y\in D_z\\ (q_kp')(y)=0}}\phi(y)\right |\\
&\leq  
\sum_{z\in E\cup F}
\sum_{\substack{x\in D_z\\ q_k'(x)=0}} \epsilon/N = \epsilon
\end{align*}
where the roots in the sums are repeated according to multiplicity 
and the last inequality follows from Claims 1.~and 2.~above.
\end{proof}
In order to complete the proof of Theorem~\ref{maintechnical}, 
we must show that the union of the conditions 
\ref{uniformboundroots}.~and~\ref{balayage}.~are hereditary, 
i.e. are passed on to the sequence of normalized derivatives
$(\frac{1}{n_k}q_k')_k$. 
This is the content of the following proposition.
%
%
\begin{prop}\label{Hereditary}
  If $(q_k)_k$ is a sequence of polynomials as above, satisfying the conditions \itemref{uniformboundroots}\ and \itemref{balayage}\ of Theorem~\ref{maintechnical}, then the sequence of 
  normalized derivatives $(\frac{1}{n_k}q_k')_k$ 
  also satisfies the conditions 1.~and 2.~with the same harmonic function $p$.
\end{prop}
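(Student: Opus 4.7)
The plan is to verify conditions~\itemref{uniformboundroots}\ and~\itemref{balayage}\ for the monic polynomials $\tilde q_k := q_k'/n_k$, which have degrees $n_k - 1 \to \infty$. The two main inputs from what precedes are the locally uniform convergence $q_k'/(n_k q_k) \to p'$ on bounded open subsets of $U$ on which $q_k$ is eventually non-vanishing, furnished by Lemma~\ref{BalayagePotConv}, together with the observation that $p'$ is a holomorphic function on $U$ with only isolated zeros (since $p$ is nowhere locally constant, $p' \not\equiv 0$ on any connected component of $U$).

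For condition~\itemref{balayage}\ I would start from the pointwise identity
\[
p_{\mu_{\tilde q_k}}
= \frac{n_k}{n_k - 1}\,p_{\mu_k}
+ \frac{1}{n_k - 1}\log\left|\frac{q_k'}{n_k q_k}\right|,
\]
and set $\tilde d_k := \tfrac{n_k}{n_k - 1}\,d_k$ together with $\tilde{\Exc} := \Exc \cup \{z \in U : p'(z) = 0\}$; this enlarged exceptional set still has no accumulation points in $U$. A short rearrangement gives
\[
\tilde d_k + p_{\mu_{\tilde q_k}} - p
= \tfrac{n_k}{n_k - 1}\,(d_k + p_{\mu_k} - p)
+ \tfrac{1}{n_k - 1}\left[\,p + \log\left|\frac{q_k'}{n_k q_k}\right|\right].
\]
On $U \setminus \tilde{\Exc}$ both bracketed quantities are locally bounded ($p$ is locally bounded on $U$ and $\log|q_k'/(n_k q_k)| \to \log|p'|$ locally uniformly by Lemma~\ref{BalayagePotConv}), so the right-hand side tends to $0$ locally uniformly as $k\to\infty$ in $N_2$. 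This is condition~\itemref{balayage}\ for $(\tilde q_k)_k$, with the same $N_2$.

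For condition~\itemref{uniformboundroots}\ I would argue by a Rouché argument. Given a compact $K \subset U$, cover it by finitely many closed disks $\overline{\D}(c_i, r_i) \subset U$, each meeting $\tilde{\Exc}$ in at most one point and with $p'$ non-vanishing on the bounding circle. On a disk $D$ disjoint from $\Exc$, $q_k$ is eventually zero-free on $\overline{D}$, so $q_k'/(n_k q_k)$ is holomorphic there and ordinary Rouché applied to it and $p'$ shows that $q_k'$ has exactly as many zeros in $D$ as $p'$ does---a bounded number. On a disk $D$ containing a point of $\Exc$, the meromorphic version of Rouché applied to the same pair (noting that $q_k'/(n_k q_k)$ has a simple pole at every distinct zero of $q_k$ in $D$ and a zero precisely at each zero of $q_k'$ where $q_k$ does not vanish) yields, with multiplicities throughout,
\[
(\text{zeros of }q_k'\text{ in }D \setminus q_k^{-1}(0))
-(\text{distinct zeros of }q_k\text{ in }D)
= (\text{zeros of }p'\text{ in }D).
\]
Adding the contribution $\sum_j(\mathrm{mult}_{z_{k,j}}(q_k) - 1)$ at common zeros of $q_k$ and $q_k'$ in $D$ and using the root-sparsity bound $\mu_k^{\#}(D) \leq M$ from condition~\itemref{uniformboundroots}\ for $(q_k)_k$ yields a total multiplicity of at most $M + \mathrm{mult}_D(p')$ for zeros of $q_k'$ in $D$. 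Summing over the finite cover gives the desired uniform bound on $\mu_{\tilde q_k}^{\#}(K)$.

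The main obstacle is this Rouché step at points of $\Exc$: one needs to set up the meromorphic argument principle carefully in the presence of accumulating zeros of $q_k$, and keep multiplicities versus distinct-zero counts straight. Once that is done, the rest of the argument is a direct calculation.
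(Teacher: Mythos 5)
For condition~\itemref{balayage}\ your manipulation of potentials is essentially the paper's argument: the paper takes logarithms of $q_k'/(n_kq_k)=p'(1+o(1))$, divides by $n_k-1$, and rearranges; your identity for $p_{\mu_{\tilde q_k}}$ packages the same algebra. One small imprecision: it is not enough that $(d_k+p_{\mu_k}-p)$ is locally bounded—you need it to tend to $0$ (which the hypothesis provides) since its prefactor $n_k/(n_k-1)\to 1$, not $0$. Another point you pass over: to feed Lemma~\ref{BalayagePotConv} into both conditions you need $q_k$ eventually non-vanishing on compacts disjoint from $\Exc$, i.e.\ you need $\Exc$ to contain the accumulation points (in $U$) of the root sequences. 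The remark following Lemma~\ref{BalayagePotConv} only says $\Exc$ may be taken to be \emph{contained in} the set of accumulation points; the paper redoes a diagonal argument so that $\Exc$ \emph{equals} the set of accumulation points for $k\in N_2$. Your proof needs that enlargement step, which is cheap but must be said.

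The real gap is in condition~\itemref{uniformboundroots}. Your Rouché argument uses the convergence $q_k'/(n_kq_k)\to p'$ that Lemma~\ref{BalayagePotConv} and condition~\itemref{balayage}\ supply only along the subsequence $N_2\subset N_1$, so it bounds $\mu_{\tilde q_k}^{\#}(K)$ only for $k$ in $N_2$. But root-sparsity demands the bound for \emph{all} $k$ sufficiently large. You therefore need a supplementary compactness argument: assume $\mu_{\tilde q_{k_j}}^{\#}(K)\to\infty$ for some $k_j\to\infty$, set $N_1=\{k_j\}$, and derive a contradiction from the bound on $N_2\subset N_1$. The paper circumvents this entirely by deducing condition~\itemref{uniformboundroots}\ directly from Proposition~\ref{ResultFirsderivatives}: choose $f\in\cont_c(U)$ with $f\equiv 1$ on $K$ and estimate $\nu_k^{\#}(K)\le \xi_{k,1}(f)=\xi_k(f)+\xi_{p'}(f)+o(1)$, which holds for all $k$ (Proposition~\ref{ResultFirsderivatives} is already stated as convergence of the full sequence, the subsequence bookkeeping having been done in its proof). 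Your Rouché route essentially re-derives part of that proposition from scratch, and in doing so loses the ``for all $k$'' quantifier. The meromorphic argument-principle bookkeeping you describe (simple poles at each distinct zero of $q_k$, the extra $\sum_j(\mathrm{mult}_{z_{k,j}}(q_k)-1)$ correction) is correct, but the cleaner and gap-free path is the paper's, via Proposition~\ref{ResultFirsderivatives}.
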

\begin{proof}
Assume that $(q_k)_k$ is a sequence of polynomials satisfying the conditions \itemref{uniformboundroots}\ and \itemref{balayage}\ of Theorem~\ref{maintechnical}. To alleviate notation we set $\nu_k:=\mu_{q_k'}$.

Inheritance of condition \itemref{uniformboundroots}~is a consequence of Proposition~\ref{ResultFirsderivatives} as follows.
Consider a compact set $K \subset U$, and let $f\in \cont_c(U)$ be a function $f:U \rightarrow [0, 1]$, which is $1$ on $K$. Then 
\begin{align}
  \nonumber\nu_k^\#(K) &= (n_{k}-1)\nu_k(K) \\
  & \leq \xi_{k,1}(f)
    = \xi_k(f) + \xi_{p'}(f)+ o(1)\label{eq:nu_k_estimate}  \\
  \nonumber & \leq \mu_k^\#(\Supp(f))+ s + o(1),
\end{align}
as $k\to\infty$, where the equality in \eqref{eq:nu_k_estimate} 
follows from Proposition~\ref{ResultFirsderivatives}, 
and $s$ is the number of critical points of $p_{\mu}$ in $\Supp(f)$. 
Condition \itemref{uniformboundroots}\ for the sequence of normalized derivatives $(\frac{1}{n_k}q_k')_k$ then follows from condition \itemref{uniformboundroots}\ for $(q_k)_k$.

To show inheritance of condition \itemref{balayage}\ we have to show that given any infinite set
$N_1\subset\N$ there exists an infinite subset $N_2\subset N_1$, a subset $\Exc'\subset U$ and a sequence $(d'_k)_k$ such that
$d'_k+p_{\nu_k} \to p$ locally uniformly on $U\setminus \Exc'$ for 
$k\to\infty$, $k\in N_2$. 

Let $v : U \to (0, \infty)$ denote the function $v(z) = |z| + 1/d(z,\partial U)$. 
Then $v$ is bounded on any compact subset of $U$ and for any $c>0$ 
the set $v^{-1}([0,c])$ is compact.

Assume we have an infinite set $N_1\subset\N$.
Let $z_{k,j}$ denote the roots of $q_k$ ordered 
such that
\begin{itemize}
    \item if $z_{k,j} \in U$ and $z_{k,l} \notin U$
    then $j < l$
    \item if $z_{k,j}, z_{k,l} \in U$
    then 
    $(j \leq l 
    \Leftrightarrow v(z_{k,j}) \leq v(z_{k,l}))$.
\end{itemize}
By a standard diagonal argument, there exists an infinite set
$N_2 \subset N_1$ and a sequence $(z_j) \subset \CC$,
such that $z_{k,j} \to z_j$ as $k \to \infty$ in $N_2$.

Let $E = U \cap \{z_j: k=1,2,\ldots\}$.
The set $E$ has no accumulation points
in $U$ by root sparsity, condition \itemref{uniformboundroots}\ 

We claim that if a compact set $L \subset U$ does not meet $E$,
then there exists $k_0 \in N_2$, such that when $k \geq k_0$, $k \in N_2$,
$q_k$ has no zeros in $L$.

To see this, let $\hat{v} = \max\{v(z): z \in L\}$ and $K = \{z\in U: v(z) \leq \hat{v}\}$.
Then $L \subset K \subset U$, and $K$ is compact.
By root-sparsity, only a finite number $j_0$ of limit points $z_j$ are elements of $K$.
The points are ordered such that these points must be $z_1, \ldots, z_{j_0}$.
Choose $\epsilon > 0$ such that $d(z_j, L) \geq \epsilon$ for $j=1, \ldots, j_0$
and such that $d(z_{j_0+1}, K) \geq \epsilon$.

By convergence of the roots sequences $(z_{k,j})_k$,
there exists $k_0 \in N_2$, such that when $k \geq k_0$ 
and $k \in N_2$,
we have $d(z_{k,j}, z_j) < \epsilon$ for $j = 1, 2, \ldots, j_0 + 1$.

Let $k \geq k_0$ and $k \in N_2$ be arbitrary.
We show that $z_{k,j} \notin L$, by dividing into the three cases $j < j_0 + 1$, $j=j_0 + 1$
and $j > j_0 + 1$.

When $j < j_0 + 1$, we have $d(z_{k, j}, L) \geq d(z_j, L) - d(z_j, z_{k,j}) > 0$.

When $j = j_0 + 1$ we have $d(z_{k, j}, K) \geq d(z_j, K) - d(z_j, z_{k,j}) > 0$.
So $z_{k,j_0+1} \notin K \supset L$.
It follows that $v(z_{k,j_0+1}) > \hat{v}$ which we will use in the third and last case.

When $j > j_0 + 1$, either $z_{k,j} \notin U \supset L$,
or $z_{k,j} \in U$ and then by the ordering $v(z_{k,j}) \geq v(z_{k, j_0+1}) > \hat{v}$,
so $z_{k,j} \notin K \supset L$.
This proves the claim.

It then follows from condition \itemref{balayage}\ for $(q_k)_k$ and Lemma~\ref{BalayagePotConv} that we can assume $E = \Exc$ and that 
\[\frac{1}{n_k}\log\left\lvert q_k 
\right\rvert + d_k \to p
\qquad\textrm{as } k\to\infty \textrm{ in } N_2,\]
locally uniformly on $U\setminus \Exc$.

Let $F=\{z\in \C : p'(z)=0\}$. 
Also from Lemma~\ref{BalayagePotConv} 
we have $\frac{q_k'}{n_k q_k} \to p'$ 
locally uniformly on $U\setminus \Exc$ as $k\to\infty$ in $N_2$, 
so that 
\[\frac{q_k'}{n_k q_k} = p'(1+o(1))
\qquad\textrm{as } k\to\infty \textrm{ in } N_2,\]
locally uniformly on $U\setminus (\Exc\cup F)$. 
Taking $\log$ and dividing by $n_k-1$ we obtain 
\[\frac{1}{n_k-1}\log\left\lvert\frac{q'_k}{n_k}\right\rvert - \frac{1}{n_k-1}\log|q_k| = o(1)
\qquad\textrm{as } k\to\infty \textrm{ in } N_2.\]
Finally, since $p$ is uniformly bounded on compact 
subsets of $U$ we obtain 
\begin{align*}
p_{\nu_k}=\frac{1}{n_k-1}\log\left\lvert \frac{q'_k}{n_k}\right\rvert
&= \left(1+\frac{1}{n_k-1}\right)\frac{1}{n_k}\log\left\lvert q_k \right\rvert + o(1)\\
&=\left(1+\frac{1}{n_k-1}\right)(p-d_k + o(1)) =  p-d_k'+o(1),
\end{align*}
locally uniformly on $U\setminus \Exc\cup F$ as $k\to\infty$ in $N_2$. 
Thus, condition \itemref{balayage}\ holds for the sequence of normalized derivatives,
with $N_2$ as described above, $\Exc'=\Exc\cup F$ and $d_k'=d_k(1+\frac{1}{n_k-1})$.
\end{proof}

It is straightforward to prove Theorem~\ref{maintechnical} from 
Propositions~\ref{ResultFirsderivatives} and
\ref{Hereditary}.
\begin{proof}
    We must show
    \begin{equation}\label{eq:divisorrel}
      \xi_{k,m}-\xi_k \to m\xi_{p'}
      \text{ as }k\to \infty
    \end{equation}
    for $m=0, 1, 2, \ldots$
    
    Applying Proposition~\ref{Hereditary} inductively, 
    each of the sequences $\left(\frac{(n_k-m)!}{n_k!}q_k^{(m)}\right)_k$ 
    satisfies the requirements of Theorem~\ref{maintechnical}, 
    and by Proposition~\ref{ResultFirsderivatives}
    \[ 
      \xi_{k,m+1}-\xi_{k,m} \to \xi_{p'}
      \text{ as }k\to \infty
    \]
    for $m=1,2,\ldots$
    Equation \eqref{eq:divisorrel} now follows by summation.
    Indeed 
    \begin{equation*}
        \xi_{k,m} - \xi_k
        = \sum_{l=1}^m (\xi_{k,l} - \xi_{k,l-1}) 
         \underset{k\to\infty}{\longrightarrow} m\xi_{p'} 
    \end{equation*}
    where $\xi_{k,0} := \xi_k$.
\end{proof}

Having proven Theorem~\ref{maintechnical}, we now prove
Theorem~\ref{main}.
\begin{proof}
First assume that $U$ is connected.
We will show that the hypotheses of \thmref{main}
imply the hypotheses of \thmref{maintechnical}, with
$p = p_\mu$.
This will show the theorem in the case where $U$ is connected.

Let $N_1 \subset \N$ be an infinite subset.

We define a function $v$ and order the roots in
the same way we did in the proof of Proposition~\ref{Hereditary}.
That is, we define the function $v : U \to (0, \infty)$ by $v(z) = \absv{z} + 1/d(z, \partial U)$.
For each $k$, order the roots $z_{k,j}$, $j=1,\ldots,n_k$ of $q_k$,
such that the roots inside $U$ are ordered first and in ascending order
with respect to $v$.
More formally, label the roots such that
\begin{itemize}
\item if $z_{k,j} \in U$ and $z_{k,l} \notin U$
    then $j < l$, and
\item if $z_{k,j}, z_{k,l} \in U$
    then 
    $(j \leq l 
    \Leftrightarrow v(z_{k,j}) \leq v(z_{k,l}))$.
\end{itemize}

By a standard diagonal argument, there exists an infinite set $N_2 \subset N_1$,
such that for every $j$, $z_{k,j} \to z_j \in \CC$ as $k\to \infty$ in $N_2$.
Notice that the limits $z_j \in \CC$ fulfill the same two ordering properties as $z_{k,j}$.

Put $\Exc = U \cap \{z_j : j=1,2,\ldots\}$.
By root-sparsity, $\Exc$ has no accumulation points in $U$. 
We claim that if $L \subset U$ is a compact set, not meeting $\Exc$, then $q_k$ has no roots in $L$, for $k\in N_2$ sufficiently big. 
The claim follows as a consequence of the ordering of the roots and root sparsity (condition \itemref{uniformboundroots}), and was proven in the proof of Proposition~\ref{Hereditary}.

We have $p_\mu'(z) = \int \frac{d\mu(w)}{z-w}$ on $U$,
and $p_{\mu_k}'(z) = \int \frac{d\mu_k(w)}{z-w}$ when $q_k(z) \neq 0$.
From this and the claim, it follows that
$p_{\mu_k}' \to p_{\mu}'$ uniformly on compact subsets of $U \setminus \Exc$.

Fix some base-point $z_0 \in U$.
Since $U \setminus \Exc$ is connected, it follows from the integral representation
\eqref{eq:recuperatepot} that locally uniformly in $U \setminus \Exc$
\[ p_{\mu_k} - p_{\mu_k}(z_0) \to p_\mu - p_\mu(z_0)
\text{ as }k\to \infty \text{ in }N_2.
\]
Hence
\[ 
  p_{\mu_k} + d_k \to p_\mu
\]
locally uniformly in $U \setminus \Exc$ as $k \in N_2$ tends to infinity,
where $d_k = p_\mu(z_0) - p_{\mu_k}(z_0)$.

This shows that the hypotheses of \thmref{maintechnical} are satisfied
and \thmref{main} follows in the case where $U$ is connected.

It remains to consider the case where $U$ is disconnected.
We must show
\begin{equation*}
(\xi_{k,m}-\xi_k - m\xi_{p_{\mu}'})(\phi) \to 0 \text{ as }k \to \infty
\end{equation*}
for any $\phi \in \cont_c(U)$.
Let $K \subset U$ be the support of $\phi$.
Since the connected components of $U$ form an open cover of $K$,
compactness of $K$ implies that only finitely many of them
meet $K$. Denote those finitely many components
by $U_j$, $j=1,\ldots, N$,
and notice $K \subset \bigcup_{j=1}^N U_j$.

Let $\lambda_{U_j}$ be the characteristic function, i.e.\ $\lambda_{U_j}$
is $1$ on $U_j$ and zero elsewhere.
We have $\phi = \sum_{j=1}^N \lambda_{U_j} \phi$
and $\lambda_{U_j} \phi \in \cont_c(U_j)$.
Hence
\[
(\xi_{k,m}-\xi_k - m\xi_{p_{\mu}'})(\phi)
= \sum_{j=1}^N
(\xi_{k,m}-\xi_k - m\xi_{p_{\mu}'})(\lambda_{U_j} \phi)
\]
Since $\lambda_{U_j} \phi \in \cont_c(U_j)$, and we have just shown that
\thmref{main} holds on connected open sets such as $U_j$,
we have 
\[
(\xi_{k,m}-\xi_k - m\xi_{p_{\mu}'})(\lambda_{U_j} \phi) \to 0 \text{ as }k\to \infty,
\]
for each $j = 1, 2, \ldots, N$,
completing the proof.
\end{proof}

It only remains to prove Corollaries \ref{corollaryconvex} and \ref{corollaryPonvex}.

\begin{proof}[Proof of Corollary \ref{corollaryconvex}]
The result follows, if we can show that for every infinite set $N_1 \subset \N$,
there exists an infinite subset $N_2 \subset N_1$, such that
\[
      \xi_{k,m} - \xi_k \to 0 \text{ on }U, \text{ as $k \to \infty$ in $N_2$.}
\]

Let $N_1 \subset \N$ be an arbitrary infinite subset.
Since the root loci are uniformly bounded, $(\mu_k)_k$ is a precompact family.
Hence there exists an infinite subset $N_2 \subset N_1$ and a Borel probability measure $\mu$ with compact support,
such that $\mu_k \wto \mu$ as $k \to \infty$ in $N_2$.
Since $V$ contains the support of $\mu$ and is convex, $p_\mu$ has no critical points
in $U$ and in particular $\xi_{p_\mu'}=0$.
Convexity of $V$ implies that 
every connected component of $U$ is unbounded.
Therefore $p_\mu$ is not constant on any component of $U$. 
By \thmref{main}, $\xi_{k,m} - \xi_k \to 0$ on $U$ as $k \to \infty$ in $N_2$.
\end{proof}

\begin{proof}[Proof of Corollary \ref{corollaryPonvex}]
    The potential $p_\omega = g_U+I(\omega)$ is harmonic and not constant on $U$. It has no critical points because $K$ is connected.
\end{proof}

\paragraph{Acknowledgments}
The authors would like to thank the
Danish Council for Independent Research |  Natural Sciences for
support via the grant DFF--1026--00267B.
%
%
%
%
\bibliographystyle{plain}
\bibliography{limitmeasure.bib}
\end{document}